\newcommand{\tabitem}{~~\llap{\textbullet}~~}
\newcommand{\comments}[1]{}
\numberwithin{equation}{section}
\def\blfootnote{\xdef\@thefnmark{}\@footnotetext}
\definecolor{orange}{rgb}{1,0.5,0}
\newcommand{\ds}{\displaystyle}
\newcommand{\interior}{\mbox{int}\,}
\theoremstyle{plain}
\newtheorem{thm}{Theorem}[section]
\newtheorem{lem}[thm]{Lemma}
\theoremstyle{definition}
\newtheorem{dfn}{Definition}[section]
\theoremstyle{remark} 
\newtheorem{rmk}{Remark}[section]
\newcommand{\C}{{\mathbb{C}}}
\newcommand{\CR}{{\hat{\mathbb{C}}}}
\newcommand{\CS}{{\mathbb{C}^*}}
\newcommand{\B}{\mathcal B}
\newcommand{\Z}{{\mathbb{Z}}}
\newcommand{\N}{{\mathbb{N}}}
\begin{document}

\bibliographystyle{amsalpha}

\title[The escaping set of transcendental self-maps of $\C^*$]{The escaping set of transcendental self-maps of the punctured plane}
\author[D. Mart\'i-Pete]{David Mart\'i-Pete}
\address{Department of Mathematics and Statistics\\ The Open University\\ Walton Hall\\ Milton Keynes MK7 6AA\\ United Kingdom}
\email{david.martipete@open.ac.uk}
\date{\today}

\maketitle

\begin{abstract}
We study the different rates of escape of points under iteration by holomorphic self-maps of $\mathbb C^*=\mathbb C\setminus \{0\}$ for which both $0$ and $\infty$ are essential singularities. Using annular covering lemmas we construct different types of orbits, including fast escaping and arbitrarily slowly escaping orbits to either 0, $\infty$ or both. We also prove several properties about the set of fast escaping points for this class of functions. In particular, we show that there is an uncountable collection of disjoint sets of fast escaping points each of which has $J(f)$ as its boundary.
\end{abstract}

\section{Introduction}

Complex dynamics concerns the iteration of a holomorphic function on a Riemann surface $S$. Given a point $z\in S$, we consider the sequence given by its iterates $f^n(z)=(f\circ\ds\mathop{\cdots}^{n}\circ f)(z)$ and study the possible behaviours as $n$ tends to infinity. We partition $S$ into the \textit{Fatou set}, or set of stable points,
$$
F(f):=\bigl\{z\in S\ :\ (f^n)_{n\in\mathbb N} \mbox{ is a normal family in some neighbourhood of } z\bigr\}
$$
and the \textit{Julia set} $J(f):=S\setminus  F(f)$, consisting of chaotic points. If $f:S\rightarrow S$ is holomorphic and $\CR\setminus S$ consists of essential singularities, then there are three interesting cases:
\begin{itemize}
\item $S=\CR:=\C\cup\{\infty\}$ and $f$ is a rational map;
\item $S=\C$ and $f$ is a transcendental entire function;
\item $S=\CS:=\C\setminus\{0\}$ and \textit{both} $0$ and $\infty$ are essential singularities.
\end{itemize}
We study this third class of maps, which we call \textit{transcendental self-maps of} $\CS$. Note that if $f$ has three or more omitted points, then Montel's theorem tells us that $f$ is constant. In particular, $f:\C^*\rightarrow \C^*$ has no omitted values. A basic reference on iteration theory in one complex variable is \cite{milnor06}. See \cite{bergweiler93} for a survey on transcendental entire and meromorphic functions. 

The iteration of transcendental (entire) functions dates back to the times of Fatou \cite{fatou26}. However, we have to wait until 1953 to find a paper about the iteration of holomorphic self-maps of $\C^*$ \cite{radstrom53}. Bhattacharyya in his PhD thesis \cite{bhattacharyya} under the supervision of Prof. Noel Baker showed that such maps are all of the form
\begin{equation}
f(z)=z^n\exp\bigl(g(z)+h(1/z)\bigr),
\label{eqn:bhat}
\end{equation}
where $n\in\mathbb Z$ and $g,h$ are non-constant entire functions. Since then many other people have studied them; for example, see \cite{baker87}, \cite{kotus87}, \cite{makienko87}, \cite{keen86}, \cite{liping91}, \cite{mukhamedshin91},  \cite{hinkkanen94}, \cite{bergweiler95} and \cite{baker-dominguez98}. Such maps arise in a natural way when you complexify circle maps. For instance, the so-called Arnol'd standard family of perturbations of rigid rotations 
$$
f_{\alpha\beta}(\theta)=\theta+\alpha+\beta\sin(\theta)\pmod{2\pi},\quad \mbox{ where }0\leqslant \alpha\leqslant 2\pi,\ \beta\geqslant 0,
$$
has as its complexification $\widehat{f}_{\alpha\beta}(z)=ze^{i\alpha}e^{\beta(z-1/z)/2}$ \cite{fagella99}. 

\nocite{liping93}
\nocite{liping97}
\nocite{liping98}
\nocite{keen89}
\nocite{kotus90}
\nocite{makienko91}

\nocite{fatou26}
\nocite{fatou19}
\nocite{julia18}


In this paper we study the escaping set of transcendental self-maps of $\C^*$. For an \textit{entire} function $f$, the \textit{escaping set} of $f$ is defined by
$$
I(f):=\{z\in\C :\ f^n(z)\rightarrow \infty \mbox{ as } n\rightarrow \infty\}.
$$ 
If $f$ is a polynomial, then infinity is an attracting fixed point and the escaping set consists of the basin of attraction of infinity, $\mathcal A(\infty)$, which is part of the Fatou set and is connected. Moreover, $J(f)=\partial \mathcal A(\infty)$, so the escaping set has a close relationship to the Julia set.

For transcendental entire functions, the escaping set also plays an important role. It was first studied by Eremenko \cite{eremenko89} who used Wiman-Valiron theory to show that, for a transcendental entire function $f$, 
\begin{enumerate}
\item[I1)] $I(f)\cap J(f) \neq \emptyset$;
\item[I2)] $J(f) = \partial I(f)$;
\item[I3)] the components of $\overline{I(f)}$ are unbounded.
\end{enumerate} 
Furthermore, in \cite{eremenko-lyubich92} the so-called \textit{Eremenko-Lyubich class}
$$
\mathcal B:=\{f \mbox{ trancendental entire}\ :\ \mbox{sing}(f^{-1}) \mbox{ is bounded}\}
$$
was introduced (the set $\mbox{sing}(f^{-1})$ consists of the critical values and asymptotic values of $f$) and the authors showed that 
\begin{enumerate}
\item[I4)] if $f\in\mathcal B$, then $I(f)\subseteq J(f)$. 
\end{enumerate}
In his article \cite{eremenko89} Eremenko conjectured that all the components of the escaping set are also unbounded. In 2011 it was proved that this is true for functions of finite order in class $\B$ \cite{rrrs11}, but for general transcendental entire functions this remains an open question.

Key progress on Eremenko's conjecture was obtained by studying the \textit{fast escaping set} defined by
$$
A(f):=\{z\in\C\ :\ \exists \ell\in\N,\ |f^{n+\ell}(z)|\geqslant M^n(R,f) \mbox{ for all } n\in\mathbb \N\},
$$
where $M(R,f)=\max_{|z|=R}|f(z)|$ and $R>0$ is chosen to be sufficiently large so that $M^n(R,f)\rightarrow+\infty$ as $n\rightarrow \infty$. Here and throughout $\N=\{0,1,2,\hdots\}$.

The set $A(f)$, which consists of the points that escape about as fast as possible, was introduced by Bergweiler and Hinkkanen, and it shares some properties with $I(f)$, for example, $J(f)=\partial A(f)$ and $J(f)\cap A(f)\neq\emptyset$, see \cite{bergweiler-hinkkanen99} and \cite{rippon-stallard05}. But it also has some much nicer properties. Rippon and Stallard showed that all the components of $A(f)$ are unbounded, and hence $I(f)$ has at least one unbounded component. The paper \cite{rippon-stallard12} gives a compilation of results about $A(f)$.\pagebreak

For transcendental self-maps of $\C^*$ we define the escaping set by
$$
I(f):=\left\{z\in\C^*\ :\ \omega(z,f)\subseteq \{0,\infty\} \right\}, 
$$
where $\omega(z,f):=\bigcap_{n\in\N} \overline{\{f^k(z)\ :\ k\geqslant n\}}$. The set $I(f)$ contains points that escape to $0$ as well as to $\infty$, 
$$
\begin{array}{c}
I_0(f):=\left\{z\in\C^*\ :\ f^n(z)\rightarrow 0\mbox{ as } n\rightarrow \infty\right\},\vspace{10pt}\\
I_\infty(f):=\left\{z\in\C^*\ :\ f^n(z)\rightarrow \infty \mbox{ as } n\rightarrow \infty\right\},
\end{array}
$$
and also contains points that escape from $\C^*$ by jumping infinitely many times between a neighbourhood of $0$ and a neighbourhood of $\infty$. The sets $I_0(f)$ and $I_\infty(f)$ were studied by Fang \cite{liping98} who proved that 
$$I_0(f)\cap J(f)\neq \emptyset,\quad I_\infty(f)\cap J(f)\neq\emptyset\quad \mbox{ and }\quad J(f)=\partial I_0(f)=\partial I_\infty(f),
$$
by using Wiman-Valiron theory in the way that Eremenko did for the entire case, but the full set of escaping points $I(f)$ has not previously been studied.

To classify the various types of escaping orbits we introduce the following\linebreak concept.

\begin{dfn}[Essential itinerary]
Let $f$ be a transcendental self-map of $\C^*$. We define the \textit{essential itinerary} of a point $z\in I(f)$ to be the symbol sequence $e=(e_n)\in \{0,\infty\}^\mathbb N$ such that
$$
e_n:=\left\{
\begin{array}{ll}
0, & \mbox{ if } |f^n(z)|\leqslant 1,\vspace{10pt}\\
\infty, & \mbox{ if } |f^n(z)|> 1,
\end{array}
\right.
$$
for all $n\in\N$.
\label{dfn:essential-itinerary}
\end{dfn}

For each $e\in \{0,\infty\}^\mathbb N$, the set of escaping points whose essential itinerary is eventually a shift of $e$ is
$$
I_e(f):=\{z\in I(f)\ :\ \exists\ell,k\in\N,\ \forall n\geqslant 0,\ |f^{n+\ell}(z)|>1~\Leftrightarrow~ e_{n+k}=\infty\}.
$$

We also have a notion of \textit{fast escaping} points related to an essential itinerary $e$, defined using the iterates of the maximum and minimum modulus functions
$$
M(r,f):=\max_{|z|=r}|f(z)|<+\infty,\quad m(r,f):=\min_{|z|=r}|f(z)|>0.
$$

\begin{dfn}[Fast escaping set]
Let $f$ be a transcendental self-map of $\CS$. Let $e=(e_n)\in\{0,\infty\}^\N$ and let $R_0>0$ be sufficiently large (or small) so that the sequence $(R_n)$ defined for $n>0$ by 
\begin{itemize}
\item $R_{n+1}=m(R_n)$, if $e_{n+1}=0$,
\item $R_{n+1}=M(R_n)$, if $e_{n+1}=\infty$,
\end{itemize}
accumulates to $\{0,\infty\}$. We say that a point $z\in\C^*$ is \emph{fast escaping} if there are $\ell,k\in\N$ such that 
\begin{itemize}
\item $|f^{n+\ell}(z)| \leqslant R_n$, if $e_{n+k}=0$,
\item $|f^{n+\ell}(z)| \geqslant R_n$, if $e_{n+k}=\infty$,
\end{itemize}
for all $n\in\N$. We denote the set of all fast escaping points by $A(f)$ and the set of fast escaping points with essential itinerary $e\in \{0,\infty\}^\N$ by $A_e(f)$.
\label{dfn:fast-escaping}
\end{dfn}

Observe that if $f$ is of the form \eqref{eqn:bhat}, then the behaviour of $f$ in a neighbourhood of $\infty$ depends mainly on that of the entire function $g$ while the behaviour near $0$ depends mainly on that of $h$.

We begin by proving an analogue of property (I1), namely that $I_e(f)$ and indeed $A_e(f)$ are non-empty for \textit{any} essential itinerary $e$. We follow the approach of Rippon and Stallard in \cite{rippon-stallard13} where they proved the existence of points escaping to infinity at different rates by constructing points with different annular itineraries.

\begin{thm}
Let $f$ be a transcendental self-map of $\C^*$. For each $e\in \{0,\infty\}^\mathbb N$, $A_e(f)\cap J(f)\neq \emptyset$ and hence $I_e(f)\cap J(f)\neq \emptyset$.
\label{thm:fast-escaping-set-not-empty}
\end{thm}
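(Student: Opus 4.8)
The plan is to construct, for a given essential itinerary $e \in \{0,\infty\}^{\mathbb N}$, a nested sequence of closed annuli whose images cover each other, and then extract a point lying in all of them by a compactness argument; such a point will be fast escaping with itinerary $e$, and forcing the annuli to shrink towards $J(f)$ will place the point in $J(f)$. First I would fix, as in Definition \ref{dfn:fast-escaping}, a starting radius $R_0$ (large if $e_1 = \infty$, small if $e_1 = 0$) and the associated sequence $(R_n)$ built by alternately applying $M(\cdot,f)$ and $m(\cdot,f)$ according to $e$; I would also need auxiliary radii, say $R_n' $ with $R_n < R_n' $ (or $R_n' < R_n$) chosen so that the round annulus $A_n$ bounded by the circles of radii $R_n$ and $R_n'$ (oriented according to whether $e_n = \infty$ or $0$) maps over $A_{n+1}$ in the appropriate sense. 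The key covering input is an \emph{annular covering lemma}: roughly, if $f$ is a transcendental self-map of $\C^*$ and $A$ is an annulus separating $0$ from $\infty$ on which $|f|$ achieves values of size comparable to $M(r,f)$ (respectively $m(r,f)$), then $f(A)$ contains an annulus of the next prescribed type in the sequence, and moreover contains it in a way that lets us pull back. This is the analogue for $\C^*$ of the covering lemmas used by Rippon and Stallard in \cite{rippon-stallard13}, and I would state and prove it (or cite it if available) before the main construction.

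The core of the argument is then the standard nested-preimage scheme. Having the covering statement, I would define compact sets $E_0 \supseteq E_1 \supseteq \cdots$ where $E_0$ is a suitable closed subannulus of $A_0$ and $E_{n}$ is the set of points $z \in E_{n-1}$ with $f^n(z) \in A_n$; the covering lemma guarantees each $E_n$ is nonempty and compact (a continuum, in fact), and $\bigcap_n E_n \neq \emptyset$ by the finite intersection property. Any point $\zeta$ in this intersection satisfies $|f^n(\zeta)| \geq R_n$ when $e_n = \infty$ and $|f^n(\zeta)| \leq R_n$ when $e_n = 0$ (after a harmless shift by $\ell$), so $\zeta \in A_e(f)$, and since $A_e(f) \subseteq I_e(f) \subseteq I(f)$ we also get $I_e(f)\cap J(f)\neq\emptyset$ once we know $\zeta \in J(f)$.

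To force $\zeta \in J(f)$ I would be slightly more careful in the construction: rather than using one annulus $A_n$ at each level, I would arrange the preimage continua $E_n$ to have diameter (in the spherical metric, or in the hyperbolic metric of $\C^*$) shrinking to $0$, or alternatively work near a repelling periodic point or use that $J(f)$ is the closure of such points together with the blowing-up property of $f$ near its essential singularities. A cleaner route: since $0$ and $\infty$ are essential singularities, by the analogue of the results of Fang \cite{liping98} we have $J(f) = \partial I_0(f) = \partial I_\infty(f)$, and one can choose the initial annulus $A_0$ to meet $J(f)$; the expansion along the orbit (the moduli of the annuli $f^n(E_0)$ grow, so $E_0$ itself is ``thin'') then shows the nested continua collapse onto $J(f)$, giving $\zeta \in J(f)$. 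The main obstacle I anticipate is precisely this last point — establishing the right covering lemma for self-maps of $\C^*$ with the correct uniformity (so that $f$ genuinely maps the chosen annulus \emph{over} an annulus of the prescribed type near $0$ \emph{or} near $\infty$, handling both essential singularities symmetrically) and ensuring the nested continua shrink so that the resulting point lands in $J(f)$ rather than merely in the escaping set. The bookkeeping of switching between the $M$-type and $m$-type steps dictated by an arbitrary symbol sequence $e$, while keeping all estimates uniform, is where the real work lies; the extraction of the escaping point itself is then routine.
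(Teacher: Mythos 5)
Your nested-annuli-and-covering construction for producing a point $\zeta \in A_e(f)$ is essentially the same as the paper's: the paper also builds closed annuli $B_n$ (defined via the relaxed functions $\mu(r)=\varepsilon M(r)$, $\nu(r)=m(r)/\varepsilon$, with a ``head start'' so that $B_n\subseteq\C\setminus D(0,R_n)$ or $B_n\subseteq D(0,R_n)$ according to $e_n$), applies the Bergweiler--Rippon--Stallard covering lemma (Lemma \ref{lem:annuli-covering}) to get $f(B_n)\supseteq B_{n+1}$, and extracts $\zeta$ with Lemma \ref{lem:covering}. The inductive bookkeeping you flag as ``the real work'' is handled in the paper by proving the inequality $R_n<\varepsilon^2 M(\widetilde{R_{n-1}})$ (resp.\ $m(\widetilde{R_{n-1}})/\varepsilon^2<R_n$) by induction on $n$, and this is where the auxiliary sequence $\widetilde{R_n}$ earns its keep; your version with generic $R_n'$ would need to be pinned down to make the annuli nest correctly.

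The genuine gap is in the last step, placing $\zeta$ in $J(f)$, and you are right to be uneasy there. None of your three proposed routes works as stated: the sets $E_n=\{z: f^j(z)\in B_j,\ 0\leqslant j\leqslant n\}$ need not shrink in diameter, so there is no reason the intersection is a single point or lands on $J(f)$; invoking $J(f)=\partial I_0(f)$ and choosing $A_0$ to meet $J(f)$ does not control where the point extracted by Lemma \ref{lem:covering} ends up; and ``work near a repelling periodic point'' is not compatible with forcing the point to escape along an arbitrary itinerary $e$. The paper's fix is short and does not use any of these ideas: by Baker's theorem \cite[Theorem~1]{baker87}, $f$ has at most one multiply-connected Fatou component, which is doubly-connected and separates $0$ from $\infty$; since the closed annuli $B_n$ also separate $0$ from $\infty$ and move out towards the essential singularities, $B_n\cap J(f)\neq\emptyset$ for all large $n$. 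One then runs the nested-preimage argument inside $J(f)$ using complete invariance: $f(B_n)\supseteq B_{n+1}$ together with $f^{-1}(J(f))=J(f)$ gives nonempty compact sets $B_n\cap J(f)\cap f^{-1}(B_{n+1}\cap J(f))\cap\cdots$, whose intersection yields a point of $A_e(f)\cap J(f)$. You should add this ingredient; without it the proof of the ``$\cap\,J(f)$'' part of the theorem is incomplete.
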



Our notation for annular itineraries is as follows. Let $R_+>0$ and $R_->0$ be respectively large enough and small enough such that, for all $r>R_+$, $M(r)>r$ and, for all $0<r<R_-$, we have $m(r)<r$. Then define
$$
A_0:=\{z\in\C^*\ :\ R_-< |z|< R_+\}
$$
and the following sequences of annuli:
$$
\begin{array}{ll}
A_n:=\{z\in\C^*\ :\ M^{n-1}(R_+)\leqslant |z|< M^n(R_+)\}, & \mbox{for } n>0;\vspace{10pt}\\
A_n:=\{z\in\C^*\ :\ m^{-n}(R_-)<|z|\leqslant m^{-n+1}(R_-)\},  & \mbox{for } n<0.
\end{array}
$$
Each point $z\in I(f)$ has an associated \textit{annular itinerary} $(s_n)\in \Z^\N$ with respect to the partition $\{A_n\}$ such that $f^n(z)\in A_{s_n}$ for all $n\in\N$. We prove a covering result (see Theorem \ref{thm:annular-itineraries}) which allows us to construct orbits with certain annular itineraries including the ones listed in Theorem \ref{thm:types-annular-itineraries} below.

\begin{rmk}
In this article we deal with two kinds of itineraries for escaping points that should not be confused: essential itineraries $(e_n)\in \{0,\infty\}^\N$ that describe how an escaping point accumulates to the two essential singularities, and annular itineraries $(s_n)\in \Z^\N$ that depend on the partition $\{A_n\}$. For large values of $n$, $e_n=0$ and $e_n=\infty$ correspond respectively to negative and positive terms in the annular itinerary.
\end{rmk}

\begin{thm}
Let $f$ be a transcendental self-map of $\C^*$. Given an annular partition $\{A_n\}$  defined as above with $R_+, 1/R_-$ sufficiently large, we can construct points with the following itineraries:
\begin{itemize}
\item fast escaping itineraries;
\item periodic itineraries;
\item bounded itineraries (uncountably many);
\item unbounded non-escaping itineraries (uncountably many);
\item arbitrarily slowly escaping itineraries.
\end{itemize}
\label{thm:types-annular-itineraries}
\end{thm}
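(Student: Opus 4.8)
The plan is to handle the five kinds of behaviour one at a time: for each, I will write down an explicit sequence $(s_n)\in\Z^\N$ of the desired type, check that it is \emph{admissible} in the sense required by the covering result (Theorem~\ref{thm:annular-itineraries}), and then apply that result to obtain a point $z\in\C^*$ (in fact in $J(f)$) whose annular itinerary with respect to $\{A_n\}$ is exactly $(s_n)$. Since the annular itinerary of a point is determined by its orbit, distinct admissible sequences yield disjoint sets of points; so each ``uncountably many'' clause will follow the moment we produce uncountably many admissible sequences of the relevant type.

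The only input we need about admissibility is its qualitative shape: a transition from $s_n$ to $s_{n+1}$ is allowed when it is compatible with a covering property of $f$, and for $|n|$ large the annulus $A_n$ surrounds one of the essential singularities, so $f(A_n)$ covers a long run of annuli --- in particular $A_n$ itself and $A_{n\pm1},A_{n\pm2},\dots$ out to far away. Thus, provided $R_+$ and $1/R_-$ are large (as assumed), for annuli sufficiently far from $A_0$ the moves ``step outward'' ($s_{n+1}=s_n+1$), ``stand still'' ($s_{n+1}=s_n$) and ``jump a long way back towards $A_0$'' are all admissible. With these three moves the constructions are immediate. \emph{Fast escaping:} take $s_n=n$ (or $s_n=-n$ for escape to $0$, or any sequence incrementing as fast as the partition allows); the orbit then satisfies $|f^n(z)|\ge M^{n-1}(R_+)$, which is the condition of Definition~\ref{dfn:fast-escaping} for the essential itinerary $e\equiv\infty$. \emph{Periodic:} take $s_n\equiv N$ for a fixed large $N$ (period $1$), admissible since $f(A_N)\supseteq A_N$; longer periods come from a cycle of large annuli.

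\emph{Bounded (uncountably many):} fix a large $N$; since $f(A_N)$ and $f(A_{N+1})$ each contain $A_N\cup A_{N+1}$, every sequence in $\{N,N+1\}^\N$ is admissible, and there are $2^{\aleph_0}$ of them, each giving a point whose orbit stays in $A_N\cup A_{N+1}$, hence is bounded and bounded away from $0$, so non-escaping. \emph{Unbounded non-escaping (uncountably many):} interleave long sojourns in a fixed large annulus $A_N$ with outward excursions reaching annuli $A_{k_j}$, $k_j\to\infty$, each excursion being a run of outward steps followed by one long jump back to $A_N$; varying the excursion heights (or the sojourn lengths) over an uncountable family of choices produces uncountably many admissible itineraries, each with $\liminf_n|s_n|<\infty$ (non-escaping) and $\limsup_n|s_n|=\infty$ (unbounded). \emph{Arbitrarily slowly escaping:} given any $a_n\to\infty$, build a non-decreasing sequence $(s_n)$ with $s_n\to\infty$ that increments only along a very sparse set of times --- standing still in between, which is admissible --- sparsely enough that $|f^n(z)|$ stays below the prescribed rate, while the orbit still escapes because $(s_n)$ is unbounded and eventually monotone.

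The real work is in Theorem~\ref{thm:annular-itineraries}, which we are assuming. Within the present proof the delicate points are: (i) making precise which transitions near the bounded annulus $A_0$ are admissible --- the covering estimates are weakest there, which is why in the bounded and the unbounded-non-escaping cases the orbits are sent back to a fixed \emph{large} annulus $A_N$ rather than to $A_0$ itself, after which one must still confirm the itineraries qualify as bounded, respectively unbounded and non-escaping; (ii) in the slowly escaping case, turning a prescribed rate $a_n\to\infty$ into a concrete waiting schedule and verifying that the orbit produced genuinely escapes while respecting the bound --- this step follows the mechanism of Rippon and Stallard in \cite{rippon-stallard13} most closely; and (iii) recording, in the two ``uncountably many'' statements, the (trivial but necessary) injectivity of the assignment sending an admissible sequence to the set of points realising it, which holds because a point's orbit determines its annular itinerary.
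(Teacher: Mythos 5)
Your proposal follows essentially the same route as the paper: it relies on the covering result (Theorem~\ref{thm:annular-itineraries}) together with the itinerary lemma (Lemma~\ref{lem:covering}), and for each of the five behaviours it writes down the same kind of admissible sequence the paper uses (monotone for fast escape, constant or a cycle for periodic, $\{N,N+1\}$-valued for bounded, long sojourns in $A_N$ broken by ever-higher excursions for unbounded non-escaping, and a sparsely-incrementing monotone sequence for slow escape), correctly observing, as the paper does, that the annulus $A_0$ must be avoided since $f(B_0)$ need not cover anything. The only inaccuracy is a passing overstatement that $f(A_n)$ covers annuli ``out to far away'' in both directions --- the covering theorem only guarantees coverage up to $B_{n+1}$ outward --- but none of your constructions actually use more than ``one step out, stand still, or jump far back,'' all of which are genuinely admissible.
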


Note that our proof uses a different annular covering lemma to those used in \cite{rippon-stallard13} and, in this setting we are able to avoid the exceptional sets which feature in \cite[Theorem 1.1 and Theorem 1.2]{rippon-stallard13}.

We now state a result in the spirit of property (I2) but for any essential itinerary~$e$. For the special cases of $I_0(f)$ and $I_\infty(f)$ this is due to Fang and it also follows from the results in \cite{baker-dominguez-herring01}.

\begin{thm}
Let $f$ be a transcendental self-map of $\C^*$. For each $e\in \{0,\infty\}^\mathbb N$, $J(f)=\partial A_e(f)=\partial I_e(f)$. Also $J(f)=\partial A(f)=\partial I(f)$.
\label{thm:boundaries}
\end{thm}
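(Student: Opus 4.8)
The plan is to prove the four equalities by establishing, for each essential itinerary $e$, the two inclusions $\partial A_e(f)\subseteq J(f)$ and $J(f)\subseteq \partial A_e(f)$; since $A_e(f)\subseteq I_e(f)\subseteq I(f)$ and $A_e(f)\subseteq A(f)\subseteq I(f)$, squeezing between $\partial A_e(f)$ and $\partial I(f)$ will then yield all the stated identities at once, provided we also check $\partial I(f)\subseteq J(f)$. For the inclusion $\partial A_e(f)\subseteq J(f)$ I would argue that $A_e(f)$ cannot meet the Fatou set in a way that puts boundary points there: if $z_0\in F(f)$, then on the Fatou component $U$ containing $z_0$ the family $(f^n)$ is normal, so no subsequence of moduli can be forced to track the prescribed sequence $(R_n)$ accumulating to $\{0,\infty\}$ on a full neighbourhood; more precisely, a point of $A_e(f)$ in $U$ would have all of $U$ escaping with the same essential itinerary (using that $A_e(f)$ is, up to the shift parameters $\ell,k$, forward-invariant and that points close together have comparable orbits under normality), and a standard argument shows $\partial U\subseteq J(f)$ forces a contradiction at genuine boundary points of $A_e(f)$. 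The cleanest route is: $A_e(f)$ is completely invariant in the appropriate itinerary-shifted sense, $\overline{A_e(f)}\setminus A_e(f)\subseteq J(f)$ because Fatou components are either contained in $A_e(f)$ or disjoint from it, hence $\partial A_e(f)\subseteq J(f)$.

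For the reverse inclusion $J(f)\subseteq \partial A_e(f)$, which I expect to be the main obstacle, I would combine the non-emptiness result Theorem \ref{thm:fast-escaping-set-not-empty} with a blow-up argument at Julia points. Fix $\zeta\in J(f)$ and an arbitrarily small annular neighbourhood $V$ of $\zeta$ in $\C^*$. By the blowing-up property of the Julia set for holomorphic self-maps of $\C^*$ (the analogue of Montel's theorem: the forward orbit $\bigcup_n f^n(V)$ omits at most the two points $0,\infty$, and in fact eventually covers any compact subset of $\C^*$), there is $N$ such that $f^N(V)$ contains a round annulus $\{R_-'<|w|<R_+'\}$ that is large enough to host, by the covering construction behind Theorem \ref{thm:annular-itineraries}, a point $w_0$ whose forward orbit realises the fast escaping itinerary corresponding to the shifted sequence $(R_{n})$ for $e$. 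Since $A_e(f)$ is invariant under the itinerary shift, $w_0\in A_e(f)$; pulling back along the branch of $f^{-N}$ that maps $w_0$ into $V$ gives a point of $A_e(f)$ in $V$. As $V$ was an arbitrary neighbourhood of $\zeta$ and $\zeta\notin A_e(f)$ (a Julia point cannot have a neighbourhood of escaping points with controlled itinerary, again by normality/blow-up), we get $\zeta\in\partial A_e(f)$. The delicate point here is making sure the blow-up lands an honestly \emph{round} annulus that is admissible for the covering lemma and that the itinerary indices match up after composing with $f^N$; this is where Theorem \ref{thm:annular-itineraries} must be applied with the starting radii chosen after the blow-up, and one must track the finitely many shift constants $\ell,k$ carefully so that the constructed point genuinely lies in $A_e(f)$ rather than merely in $I_e(f)$.

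Finally, to close the chain I would note that the same blow-up argument, applied without insisting on the fast rate, shows $J(f)\subseteq\partial I_e(f)$ and $J(f)\subseteq\partial I(f)$, while $\partial I(f)\subseteq J(f)$ follows because a point in $F(f)$ lies in a Fatou component that is either entirely escaping (so interior to $I(f)$) or entirely non-escaping (so in the exterior of $I(f)$) — using that $\omega(z,f)\subseteq\{0,\infty\}$ is a condition constant on Fatou components by normality. Combining $J(f)=\partial A_e(f)$ with the inclusions $A_e(f)\subseteq A(f)\subseteq I(f)$ and $A_e(f)\subseteq I_e(f)\subseteq I(f)$ and the fact that all these sets have the same boundary sandwiched between $\partial A_e(f)$ and $\partial I(f)$, all the asserted equalities follow. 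I expect the write-up to lean on Theorem \ref{thm:annular-itineraries} for the constructive direction and on elementary normal-family arguments for the soft direction, with the only real subtlety being the bookkeeping of itinerary shifts in the blow-up step.
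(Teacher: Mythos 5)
Your overall structure (prove the two inclusions for $\partial A_e(f)$, then sandwich the other sets) and your argument for $\partial A_e(f)\subseteq J(f)$ match the paper: the paper proves exactly the statement you gesture at --- that a Fatou component meeting $A_e^{-\ell}(f,R)$ has its closure contained in $A_e^{-\ell}(f,R)$ --- as Theorem~\ref{thm:fast-escaping-Fatou-components}, via Baker's distortion lemma, and then applies it in precisely the way you describe.

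For the harder inclusion $J(f)\subseteq\partial A_e(f)$, however, you take a much more laborious route than the paper, and it is worth seeing why the extra machinery is unnecessary. You propose to use the blowing-up property so that $f^N(V)$ contains a large round annulus, and then to \emph{re-run} the covering construction behind Theorem~\ref{thm:annular-itineraries} inside that annulus to manufacture a fast escaping point $w_0$, finally pulling it back into $V$. As you yourself flag, this requires delicate bookkeeping of the shift constants and of the starting radius. The paper instead takes an arbitrary $z_1\in A_e(f)$ (which exists by Theorem~\ref{thm:fast-escaping-set-not-empty}) and observes that, since $(f^n)$ is not normal on $V$ and already omits the two points $0,\infty$, Montel's theorem forces some $f^k|_V$ with $k\geqslant 1$ to hit $z_1$ or $z_2=f(z_1)$; complete invariance of $A_e(f)$ (Lemma~\ref{lem:Af-compl-inv-and-indep-of-R}) then immediately places the preimage in $A_e(f)\cap V$. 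No reconstruction, no round annulus, no rate tracking. The lesson is that once $A_e(f)\neq\emptyset$ and complete invariance are in hand, the inclusion $J(f)\subseteq\overline{A_e(f)}$ is a soft normal-families fact, not a constructive one.

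There is also a genuine error in your closing step. You write that ``$\zeta\notin A_e(f)$ (a Julia point cannot have a neighbourhood of escaping points with controlled itinerary)'' --- but the parenthetical justifies $\zeta\notin\interior A_e(f)$, not $\zeta\notin A_e(f)$, and the stronger claim is simply false: by Theorem~\ref{thm:fast-escaping-set-not-empty}, $A_e(f)\cap J(f)\neq\emptyset$, so Julia points certainly can (and do) lie in $A_e(f)$. What you need, and what the paper uses, is $\interior A_e(f)\subseteq\interior I(f)\subseteq F(f)$, which follows because periodic points are dense in $J(f)$; combined with $J(f)\subseteq\overline{A_e(f)}$ this gives $J(f)\subseteq\partial A_e(f)$. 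Replace your claim with the correct one about the interior and the argument closes.
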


Since there are uncountably many different essential itineraries, in particular this means that there is an uncountable collection of disjoint sets, each of which has the Julia set as its boundary.

We can also prove the analogue of property (I3) for any essential itinerary. When we say that a set $X$ is \textit{unbounded} in $\C^*$ we mean that $\overline{X}\cap \{0,\infty\}\neq \emptyset$.

\begin{thm}
Let $f$ be a transcendental self-map of $\C^*$. For each $e\in \{0,\infty\}^\mathbb N$, the connected components of $\overline{I_e(f)}$ are unbounded, and hence the connected components of $\overline{I(f)}$ are unbounded.
\label{thm:components-closure-If-unbdd}
\end{thm}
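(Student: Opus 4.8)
\emph{Strategy.} I would first prove that every component of $\overline{A_e(f)}$ is unbounded, using the covering construction behind Theorem~\ref{thm:annular-itineraries}, and then deduce the statements for $\overline{I_e(f)}$ and $\overline{I(f)}$ via Theorem~\ref{thm:boundaries}. For the deductions, note that $A_e(f)\subseteq I_e(f)\subseteq I(f)$, so $\overline{A_e(f)}\subseteq\overline{I_e(f)}\subseteq\overline{I(f)}$, and that $J(f)=\partial A_e(f)\subseteq\overline{A_e(f)}$ by Theorem~\ref{thm:boundaries}. Every component $D$ of $\overline{I(f)}$ (and likewise of $\overline{I_e(f)}$) meets $J(f)$: otherwise $D$ would lie in a single Fatou component $\Omega$, which meets $I(f)$ and hence $I_{e'}(f)$ for some $e'$, so normality of $(f^n)$ on $\Omega$ forces $\Omega\subseteq I_{e'}(f)$, whence $D\supseteq\overline\Omega\supseteq\partial\Omega\subseteq J(f)$ (using $\partial\Omega\neq\emptyset$), a contradiction. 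Thus $D$ contains a point $z\in J(f)\subseteq\overline{A_e(f)}$, hence contains the component of $\overline{A_e(f)}$ through $z$; once that component is shown to be unbounded, so is $D$.

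\emph{Reductions.} Fix $e$ and $z_0\in\overline{A_e(f)}$; it is enough to produce an unbounded continuum $\Lambda\subseteq\overline{A_e(f)}$ with $z_0\in\Lambda$. First, $A_e(f)$ and hence $\overline{A_e(f)}$ are completely invariant, since pre- or post-composing with $f$ only shifts the pair $\ell,k$ in Definition~\ref{dfn:fast-escaping}. Second, if $z_0\notin A_e(f)$ then $z_0\in\partial A_e(f)=J(f)$, so one may take points $z_m\in A_e(f)$ with $z_m\to z_0$ lying on unbounded continua $\Lambda_m\subseteq\overline{A_e(f)}$, and pass to a subsequential Hausdorff limit $\Lambda_\infty$ of the compacta $\overline{\Lambda_m}$ in the two-point compactification $\C^*\cup\{0,\infty\}\cong\widehat{\C}$: then $\Lambda_\infty$ is a continuum containing $z_0$ and at least one of $0,\infty$, with $\Lambda_\infty\cap\C^*\subseteq\overline{A_e(f)}$, and by the boundary bumping lemma the component of $z_0$ in $\Lambda_\infty\setminus\{0,\infty\}$ is a connected subset of $\overline{A_e(f)}$ whose closure meets $\{0,\infty\}$, so it does the job. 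Hence it suffices to construct $\Lambda$ when $z_0\in A_e(f)$.

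\emph{The construction.} This is the heart of the proof and uses Theorem~\ref{thm:annular-itineraries}. After conjugating by $z\mapsto1/z$ if necessary (which turns $f$ into another transcendental self-map of $\C^*$ and interchanges $0$ and $\infty$), assume $\infty$ occurs infinitely often in $e$, so the annular itinerary $(s_n)$ of $z_0$ with respect to $\{A_n\}$ has $s_n\to+\infty$ along the times with $e_n=\infty$; being the itinerary of a fast escaping point, it is of the kind addressed by the annular covering lemma. Arguing as in the proof of Theorem~\ref{thm:types-annular-itineraries}, one pulls back crossing continua of deep annuli along the orbit of $z_0$ and, using complete invariance to remain in $\overline{A_e(f)}$, organises them --- matching along the common bounding circles $\{|z|=M^{j}(R_+)\}$ of consecutive annuli --- into a single continuum $\Lambda\ni z_0$ in $\overline{A_e(f)}$ that \emph{crosses} (meets both bounding circles of) arbitrarily deep annuli $A_j$ visited by the orbit of $z_0$. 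Such a $\Lambda$ reaches moduli $\geq M^{j}(R_+)$ for arbitrarily large $j$ and is therefore unbounded in $\C^*$.

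\emph{Main obstacle.} The delicate point is exactly this organisation: showing that the crossing continua supplied by the covering lemma at successive scales can be threaded into one connected unbounded set through the prescribed point $z_0$, while all remaining inside $\overline{A_e(f)}$. It is here that the particular annular covering lemma behind Theorem~\ref{thm:annular-itineraries} is needed, and in particular the fact that --- unlike the constructions in \cite[Theorems 1.1 and 1.2]{rippon-stallard13} --- it involves no exceptional sets, so that the continuum can be anchored at \emph{any} fast escaping point. This step plays the role that Eremenko's Wiman--Valiron argument plays in the entire case \cite{eremenko89}.
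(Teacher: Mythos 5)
The paper's proof of this theorem is short and topological, and is quite different from yours: one supposes a component $X$ of $\overline{I_e(f)}$ is bounded away from $0$ and $\infty$, separates it from $\{0,\infty\}$ by a topological annulus $A\subseteq\C^*\setminus I_e(f)$, uses Montel to conclude $A\subseteq F(f)$ (since orbits of points of $A$ avoid the infinite set $I_e(f)$), invokes Theorem~\ref{thm:boundaries} to place a point of $J(f)$ inside the region that $A$ encloses, and then applies the Baker--Dom\'inguez result that a multiply-connected Fatou component of a transcendental self-map of $\C^*$ must be doubly-connected and must separate $0$ from $\infty$ --- contradicting the fact that $A$ separates a piece of $J(f)$ from both essential singularities. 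No covering lemma, no construction of continua, no appeal to $A_e(f)$.

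Your proposal takes a fundamentally different route: reduce to showing components of $\overline{A_e(f)}$ are unbounded, then use $J(f)=\partial A_e(f)\subseteq\overline{A_e(f)}$ and a ``every component of $\overline{I_e(f)}$ meets $J(f)$'' argument to transfer unboundedness. The reduction steps are essentially sound (with small details to be spelled out, e.g.\ that if a component of $\overline{I_e(f)}$ avoids $J(f)$ it lies in $I_e(f)\cap F(f)$ since $\overline{I_e(f)}\setminus I_e(f)\subseteq\partial I_e(f)=J(f)$, and then the whole Fatou component has itinerary equivalent to $e$). The real problem is the construction step, which you yourself flag as the ``main obstacle'': Lemma~\ref{lem:annuli-covering} and the machinery of Theorem~\ref{thm:annular-itineraries} produce \emph{orbits} with prescribed itineraries via Lemma~\ref{lem:covering}, not \emph{crossing continua} contained in $\overline{A_e(f)}$, and the passage from the former to the latter is the hard part, not a routine ``organisation.'' As written, the proposal gestures at this step but does not carry it out, so there is a genuine gap. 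For the record, the paper \emph{does} later prove that the components of $A_e(f)$ are unbounded (Theorem~\ref{thm:cc-fast-escaping-set-unbounded}), but by a quite different mechanism: taking preimages of the unbounded sets $\C\setminus D(0,R_n)$ or $D(0,R_n)$, using Lemma~\ref{lem:unbounded} to keep the preimages unbounded, and intersecting a nested sequence of continua, finishing with a boundary-bumping argument. If you want to make your reduction rigorous, you would essentially need to reproduce that argument (and be careful that ``components of $A_e(f)$ unbounded'' does not immediately give ``components of $\overline{A_e(f)}$ unbounded,'' since a component of the closure could a priori lie entirely in $J(f)$). All told, the simplest fix is to abandon the $\overline{A_e(f)}$ route and use the separating-annulus / Montel / Baker--Dom\'inguez argument directly.
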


Finally we show that, as for transcendental entire functions, the components of $A(f)$ are all unbounded.

\begin{thm}
Let $f$ be a transcendental self-map of $\C^*$. For each $e\in \{0,\infty\}^\mathbb N$, the connected components of $A_e(f)$ are unbounded, and hence the connected components of $A(f)$ are unbounded.
\label{thm:cc-fast-escaping-set-unbounded}
\end{thm}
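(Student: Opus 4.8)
The plan is to adapt the classical argument of Rippon and Stallard that components of $A(f)$ are unbounded for transcendental entire functions (see \cite{rippon-stallard05}, \cite{rippon-stallard12}), working in the annular/cylindrical setting appropriate to $\C^*$. The key structural input will be the annular covering result underlying Theorem \ref{thm:annular-itineraries} together with a ``blow-up'' type property: if $K$ is a compact, connected subset of $\C^*$ that is not contained in any bounded-modulus annulus and meets $J(f)$, then $f(K)$ wraps around $\C^*$ in a controlled way, and more to the point, for the relevant radii one can find within $f(K)$ a continuum joining the two boundary circles of an annulus of the form $\{R_n \le |z| \le R_{n+1}\}$ (or its reflection near $0$). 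I would first fix an essential itinerary $e$ and the associated radii $(R_n)$ from Definition \ref{dfn:fast-escaping}, and record the elementary monotonicity facts $M(M(r))\gg M(r)$ and $m(m(r))\ll m(r)$ for $r$ large/small, so that the $R_n$ accumulate to $\{0,\infty\}$ at the required speed.

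Next I would set up the ``fast escaping annuli'': for each $n$, let $B_n$ be the closed annulus between radii determined by $R_n$ and $R_{n+1}$ on the correct side of the unit circle (dictated by $e_n$). The heart of the argument is a \emph{crossing lemma}: if $\gamma_n \subseteq B_n$ is a continuum joining the two boundary components of $B_n$ and meeting $J(f)$, then $f(\gamma_n)$ contains a continuum $\gamma_{n+1}\subseteq B_{n+1}$ joining the two boundary components of $B_{n+1}$ and again meeting $J(f)$. This follows from the maximum/minimum modulus principle applied to $f$ on the annulus (a point of $\gamma_n$ on the inner boundary and one on the outer boundary get mapped to points whose moduli straddle the relevant $R_{n+1}$, using the definitions $R_{n+1}=M(R_n)$ or $m(R_n)$), together with connectedness of $f(\gamma_n)$ and complete invariance of $J(f)$ under $f$. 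One must be slightly careful that a continuum crossing an annulus in $\C^*$ need not separate $0$ from $\infty$, but to produce a point of $|f(\cdot)|$ equal to a prescribed value it is enough that $\gamma_n$ meets both boundary circles and $|f|$ is continuous along it.

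I would then run a standard compactness/diagonal argument. Start with an arbitrary point $z_0\in A_e(f)\cap J(f)$, whose existence is guaranteed by Theorem \ref{thm:fast-escaping-set-not-empty}; I want to show the component $X$ of $A_e(f)$ containing $z_0$ is unbounded in $\C^*$, i.e. $\overline X$ meets $\{0,\infty\}$. Using the crossing lemma in reverse along pullbacks: for each $N$, choose by the annular covering lemma a continuum $\gamma^{(N)}_0\subseteq B_0$ through $z_0$ (or through a point near $z_0$) such that $f^n(\gamma^{(N)}_0)\subseteq B_n$ for $0\le n\le N$; such continua exist precisely because the covering/blow-up property lets one pull back the crossing continua $\gamma_n$. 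Each such $\gamma^{(N)}_0$ lies in the fast escaping set up to time $N$, and the family $\{\gamma^{(N)}_0\}$ has a subsequence converging (in the Hausdorff metric on compact subsets of $\C^*$, or after passing to $\widehat\C$) to a continuum $\gamma_\infty\ni z_0$ all of whose points satisfy the fast escaping modulus bounds for \emph{all} $n$, hence $\gamma_\infty\subseteq A_e(f)$. Finally, the $\gamma^{(N)}_0$ cannot all stay in a fixed compact annulus: since $\gamma^{(N)}_0$ reaches the boundary of $B_0$ and its image continua are forced ever deeper toward $\{0,\infty\}$, a normal-families/diameter estimate (the iterates $f^n$ are locally uniformly far from $\{0,\infty\}$ nowhere on $J(f)$) forces the limiting continuum $\gamma_\infty$ to be unbounded, giving $\overline X\cap\{0,\infty\}\neq\emptyset$. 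The second assertion, for $A(f)=\bigcup_e A_e(f)$, is then immediate since each $A_e(f)\subseteq A(f)$ and a set with an unbounded subset-component has an unbounded component.

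The main obstacle I anticipate is the crossing lemma in the $\C^*$ setting: unlike in $\C$, a continuum joining the two boundary circles of an annulus does not automatically ``go around'', and the minimum modulus function $m(r)$ behaves much less tamely than $M(r)$ (there is no analogue of the $\log M$ convexity estimates), so controlling $f(\gamma_n)$ near $0$ requires care --- one likely needs to phrase everything symmetrically by passing to $1/f(1/z)$ near the singularity at $0$, and to invoke the specific covering lemma proved earlier (Theorem \ref{thm:annular-itineraries}) rather than classical distortion estimates. A secondary technical point is ensuring that the pulled-back continua genuinely pass through (or arbitrarily close to) the \emph{same} starting point $z_0$, which is where one uses that $z_0\in J(f)$ together with the blow-up property of $J(f)$.
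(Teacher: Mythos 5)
Your proposal takes a genuinely different route from the paper, and it has a real gap at the final step. The paper's argument is a backward pullback construction whose key ingredient is Lemma~\ref{lem:unbounded}: for a transcendental self-map of $\C^*$, \emph{every} component of $f^{-1}(X)$ is unbounded whenever $X\subseteq\C^*$ is an unbounded continuum (proved via an open-mapping/Jordan-curve argument). Given $z_0\in A_e(f)$, the paper takes the component $L_n$ of $f^{-n}\bigl(\C\setminus D(0,R_n)\bigr)$ or $f^{-n}\bigl(\overline D(0,R_n)\bigr)$ (according to $e_n$) that contains $z_0$; these are unbounded by $n$ applications of the lemma, the modulus bounds force them to nest, and the nested intersection $K=\bigcap_n(L_n\cup\{e_0\})$ is a continuum in $\CR$ through $z_0$ and $e_0$, whose component through $z_0$ away from $e_0$ is a closed unbounded subset of $A_e(f)$. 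The unboundedness is built in from the start, because the objects being pulled back are unbounded neighbourhoods of $0$ or $\infty$, not bounded crossing arcs.

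Your compactness step breaks exactly here. You build continua $\gamma^{(N)}_0\subseteq B_0$ through $z_0$ and take a Hausdorff limit $\gamma_\infty$, then claim $\gamma_\infty$ is unbounded. But $B_0=\overline A(R_-,R_+)$ is a fixed compact annulus, so all the $\gamma^{(N)}_0$ are uniformly bounded and any Hausdorff limit is again a compact subset of $B_0$; it cannot be unbounded. The observation that the image continua $f^n(\gamma^{(N)}_0)$ go deep toward $\{0,\infty\}$ does nothing to enlarge the continua at time $0$. What is missing is precisely an analogue of Lemma~\ref{lem:unbounded}: without a statement that preimage components of unbounded sets are unbounded, crossing continua inside bounded annuli carry no unboundedness information, and the forward crossing lemma (which is correct as far as it goes) does not substitute for it. A secondary point: you restrict to $z_0\in A_e(f)\cap J(f)$, and while one can justify this via Theorem~\ref{thm:fast-escaping-Fatou-components} (a Fatou component $U$ meeting $A_e(f)$ has $\overline U\subseteq A_e(f)$, so its $A_e(f)$-component also contains $\partial U\subseteq J(f)$), the paper's argument needs no such restriction and runs directly from any $z_0\in A_e(f)$.
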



In \cite{fagella-martipete} we study the escaping set of holomorphic self-maps of $\C^*$ of bounded type, 
$$
\mathcal B^*:=\{f:\C^*\rightarrow\C^*  \mbox{ trancendental}\ :\ \mbox{sing}(f^{-1}) \mbox{ is bounded away from } 0 \mbox{ and } \infty\}.
$$
In that paper, we prove the analogue of property (I4): if $f\in\B^*$ then $J(f)\subseteq I(f)$. Following the ideas of \cite{rrrs11}, we show that for functions in class $\B^*$ of finite order every escaping point can be connected to either $0$ or $\infty$ by a curve of points escaping uniformly.


\vspace{10pt}

\noindent 
\textbf{Structure of the paper.} In Section \ref{sec:properties-M-m} we prove the basic properties of $M(r)$ and $m(r)$ that we are going to need later. The discussion about the notions of essential itinerary and the fast escaping set is in Section \ref{sec:escaping-set}. Section \ref{sec:annular-itineraries} is devoted to the construction of the annular itineraries and the proof of Theorem \ref{thm:types-annular-itineraries}. The main result in this section, Theorem \ref{thm:annular-itineraries}, in fact allows you to construct many more types of orbits than the ones listed in the statement of Theorem \ref{thm:types-annular-itineraries}. Theorem \ref{thm:fast-escaping-set-not-empty} is proved in Section \ref{sec:fast-escaping-set} and we prove Theorems \ref{thm:boundaries}, \ref{thm:components-closure-If-unbdd} and \ref{thm:cc-fast-escaping-set-unbounded} in Section \ref{sec:the-rest}. In doing so we also show that if a Fatou component $U$ intersects the fast escaping set $A(f)$ then $\overline{U}\subseteq A(f)$ (see Theorem \ref{thm:fast-escaping-Fatou-components}). 

\vspace{10pt}

\noindent
\textbf{Notation.} In this paper $\N=\{0,1,2,\hdots\}$, $D(z_0,r)$ denotes the open disc of radius $r$ centered at $z_0$, and if $r_1,r_2>0$,
$$
A(r_1,r_2):=\{z\in \mathbb C\ :\ r_1<|z|<r_2\}  ~\mbox{ and }~ \overline{A}(r_1,r_2):=\{z\in \mathbb C\ :\ r_1\leqslant |z|\leqslant r_2\}.
$$

\vspace{10pt}

\noindent
\textbf{Acknowledgments.} The author would like to thank his supervisors Phil Rippon and Gwyneth Stallard for all their support and patient guidance in the preparation of this article, as well as N\'uria Fagella for useful discussions and suggesting such an interesting topic for my Ph.D. thesis.

\section{Properties of $M(r)$ and $m(r)$}

\label{sec:properties-M-m}

Before proving the annular covering results we need some basic properties of the maximum and minimum modulus functions. Note that we will not usually make explicit the dependence on $f$ and we will just write $M(r)$ and $m(r)$. As a consequence of the maximum modulus principle, both $M(r)$ and $m(r)$ are unimodal functions. In the following lemma we summarise their main properties. Throughout this section we will only prove the statements for $M(r)$ when $r\rightarrow +\infty$, and the other three statements can be deduced from these by using the facts that if $\tilde{f}(z)=f(1/z)$ then
$$
M(r,f)=M(1/r,\tilde{f})=\frac{1}{m(r,1/f)}=\frac{1}{m(1/r, 1/\tilde{f})}.
$$

\begin{lem}
Let $f$ be a transcendental self-map of $\CS$. The functions $M(r)$ and $m(r)$ satisfy the following properties:
\begin{enumerate}
\item[i)] $\dfrac{\log M(r)}{\log r}\rightarrow +\infty$, $\dfrac{\log m(r)}{\log r}\rightarrow -\infty$ as $r\rightarrow +\infty$, and\vspace{10pt}\\
$\dfrac{\log M(r)}{\log r}\rightarrow -\infty$, $\dfrac{\log m(r)}{\log r}\rightarrow +\infty$ as $r\rightarrow 0$; \vspace{5pt} 
\item[ii)] $\log M(r)$ and $-\log m(r)$ are convex functions of $\log r$; \vspace{5pt}
\item[iii)]  $\exists R=R(f)>0$ such that 
$$
M(r^k)\geqslant M(r)^k,\ m(r^k)\leqslant m(r)^k \mbox{ for every } r\geqslant R,\ k>1,
$$
and $\exists R'=R'(f)>0$ such that 
$$
M(r^k)\geqslant M(r)^k,\ m(r^k)\leqslant m(r)^k \mbox{ for every } r\leqslant R',\ k>1;
$$
\item[iv)] for $k>1,
\begin{array}[t]{l}
\dfrac{M(kr)}{M(r)}\rightarrow +\infty,\ \dfrac{m(kr)}{m(r)}\rightarrow 0 \mbox{ as } r\rightarrow +\infty, \mbox{ and }\vspace{10pt}\\ 
\dfrac{M(kr)}{M(r)}\rightarrow 0,\ \dfrac{m(kr)}{m(r)}\rightarrow +\infty \mbox{ as } r\rightarrow 0.
\end{array}$
\end{enumerate}
\label{lem:mrf}
\end{lem}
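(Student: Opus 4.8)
The plan is to prove all four statements for $M(r)$ as $r\to+\infty$, as the other cases follow by the symmetry relation $M(r,f)=1/m(1/r,1/\tilde f)$ recorded above, applied to the appropriate auxiliary function. The key point throughout is that, since $f:\C^*\to\C^*$ is transcendental with an essential singularity at $\infty$, the function $\log M(r,f)$ grows faster than any power of $\log r$; this is essentially the analogue for $\C^*$ of the classical fact that a transcendental entire function grows faster than any polynomial, and it is the engine behind every part of the statement.

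First I would establish (ii), the convexity of $\log M(r)$ as a function of $\log r$. Writing $f(z)=z^n\exp(g(z)+h(1/z))$ as in \eqref{eqn:bhat}, one can factor out the $z^n$ and the $h(1/z)$ part: near $\infty$, $h(1/z)$ is bounded and $z^n$ contributes the linear (in $\log r$) term $n\log r$ to $\log M$, which does not affect convexity, while $\exp(g(z))$ is essentially the maximum modulus of an entire function. More cleanly, the standard Hadamard three-circles theorem applies directly: for any $f$ holomorphic and non-vanishing on an annulus $\{r_1<|z|<r_2\}$ — which is the case here for $r_1$ large — the function $\log M(r)$ is a convex function of $\log r$ on that range, and likewise $-\log m(r)$ is convex by applying three-circles to $1/f$. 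This gives (ii) on a neighbourhood of $\infty$, which is all that is claimed.

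Next, (ii) does most of the work for (iii) and (iv). For (iii): convexity of $\varphi(t):=\log M(e^t)$ together with $\varphi(t)/t\to+\infty$ (which I prove next) forces, for $t$ large and $k>1$, the inequality $\varphi(kt)\geqslant k\varphi(t)$ — indeed a convex function with superlinear growth that we may assume eventually nonnegative satisfies $\varphi(kt)\geqslant k\varphi(t)$ once $\varphi$ is past its minimum, since the chord from the minimum is below the graph. Exponentiating gives $M(r^k)\geqslant M(r)^k$ for $r\geqslant R$; the statement for $m$ follows by applying this to $1/\tilde f$. For (iv): again by convexity, the difference quotient $(\varphi(t+\log k)-\varphi(t))/\log k$ is non-decreasing in $t$ and, by superlinear growth, tends to $+\infty$; hence $\varphi(t+\log k)-\varphi(t)\to+\infty$, i.e. $M(kr)/M(r)\to+\infty$, and similarly $m(kr)/m(r)\to 0$ via the auxiliary function.

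This leaves (i), which is the statement I would actually regard as the crux, since (iii) and (iv) were reduced to it. The claim $\log M(r)/\log r\to+\infty$ is where the transcendental (rather than rational) nature of $f$ enters essentially. If this ratio stayed bounded along some sequence $r_j\to\infty$, then $M(r_j)\leqslant r_j^{C}$ for some constant $C$, and combined with the non-vanishing of $f$ on $\{|z|>r_0\}$ and a similar lower bound — or more directly, via the Laurent expansion of $f$ on the annulus $\{|z|>r_0\}$ together with Cauchy estimates on the coefficients — one deduces that $f$ extends meromorphically (in fact, that $\infty$ is at worst a pole), contradicting the hypothesis that $\infty$ is an essential singularity. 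Concretely: $\log M(r)/\log r$ is non-decreasing in $r$ for $r$ large (a consequence of convexity of $\varphi$ in $t$, since $\varphi(t)/t$ is eventually increasing for a convex superlinear-or-bounded function past its minimum), so the limit exists in $(-\infty,+\infty]$; if it were finite, the coefficient bounds $|a_k|\leqslant M(r)/r^k\leqslant r^{C-k}\to 0$ for $k>C$ and all large $r$ would kill all sufficiently high-index Laurent coefficients at $\infty$, making $\infty$ a pole — contradiction. I expect this step, pinning down that a sub-power growth rate at an essential singularity is impossible, to be the main obstacle, though it is a well-known type of argument; everything else is convexity bookkeeping and the symmetry reductions.
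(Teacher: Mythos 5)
Your overall plan matches the paper's: reduce everything to the case $M(r)$, $r\to+\infty$, via the symmetry $M(r,f)=1/m(1/r,1/\tilde f)$, then use Hadamard three-circles for~(ii) and convexity bookkeeping for~(iii)--(iv). Two parts are genuinely different from the paper and one has a small but real gap.

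For~(i) the paper does not run a Laurent-coefficient argument. It invokes the Bhattacharyya form $f(z)=z^n\exp(g(z)+h(1/z))$ and observes that, as $r\to+\infty$, $h(1/z)$ is bounded so $\log M(r,f)/\log r$ has the same limit as $\log M(r,\tilde g)/\log r$ with $\tilde g(z)=z^n e^{g(z)}$, which is $+\infty$ by the classical entire-function fact. Your argument via vanishing of high-index Laurent coefficients (so $\infty$ would be at worst a pole) is correct and more self-contained, but you should note the reduction step is circular as you phrase it: you appeal to ``$\varphi(t)/t$ eventually increasing'', which you justify by superlinear growth, which is exactly what~(i) asserts. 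The correct order is: $\varphi'$ is nondecreasing (convexity), so $\varphi'\to L\in(-\infty,+\infty]$; if $L<\infty$ then $M(r)=O(r^L)$ and the Laurent argument kills the essential singularity; hence $L=+\infty$, and \emph{then} $\varphi(t)/t\to\infty$. For~(iv) your argument---the difference quotient $(\varphi(t+\log k)-\varphi(t))/\log k$ is nondecreasing in $t$ and $\geqslant\varphi'(t)\to+\infty$---is cleaner and more direct than the paper's, which instead writes $kr=r^{c(r)}$ with $c(r)=1+\log k/\log r$ and applies~(iii) to get $M(kr)/M(r)\geqslant M(r)^{c(r)-1}\to\infty$.

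The gap is in~(iii). The claim that a convex, eventually nonnegative $\varphi$ with superlinear growth satisfies $\varphi(kt)\geqslant k\varphi(t)$ for all $k>1$ ``once $\varphi$ is past its minimum'' is false. Take $\varphi(t)=1$ for $t\leqslant a$ and $\varphi(t)=1+(t-a)^2$ for $t\geqslant a$, with $a$ large: this is convex, nonnegative, superlinear, and already past its minimum at $t=a$, yet $\frac{d}{dt}\bigl(\varphi(t)/t\bigr)\big|_{t=a}=-1/a^2<0$, so $\varphi(ka)<k\varphi(a)$ for $k$ close to~$1$. The correct condition is not that $\varphi$ is past its minimum but that $\varphi(t)/t$ is past \emph{its} minimum, i.e.\ that the right derivative $\varphi'(t)$ has grown to exceed $\varphi(t_0)/t_0$ for some fixed $t_0$ with $\varphi(t_0)>0$; this is exactly what the paper arranges by choosing $t_1\geqslant t_0$ so that $\varphi(t)/t\geqslant\varphi(t_0)/t_0$ for $t\geqslant t_1$ and then using the convexity inequality $\varphi'(t)\geqslant(\varphi(t)-\varphi(t_0))/(t-t_0)\geqslant\varphi(t)/t$. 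The fix costs nothing since the lemma only asserts existence of some $R(f)$, but as written your justification is a non-proof.
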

\begin{proof}
\begin{enumerate}
\item[i)] This property follows from the analogous result for transcendental entire functions using the fact that $f(z)=z^n\exp\bigl(g(z)+h(1/z)\bigr)$, where $n\in\Z$ and $g,h$ are non-constant entire functions (see \eqref{eqn:bhat}), so
$$
\lim_{r\rightarrow +\infty} \frac{\log M(r,f)}{\log r}=\lim_{r\rightarrow +\infty} \frac{\log M(r,\tilde{g})}{\log r} = +\infty,
$$
where $\tilde{g}(z)=z^n\exp g(z)$. 


\item[ii)] This means that $\phi(t)=\log M(\exp t)$ is a convex function of $t$ and the property is usually referred to as the Hadamard three circles theorem, see \cite{ahlfors53}. Observe that in the hypothesis of that theorem you only need that the function is analytic in an annulus $r_1<|z|<r_2$ and it therefore applies to holomorphic self-maps of $\C^*$.

\item[iii)] See \cite[Lemma 2.2]{rippon-stallard09} or \cite[Theorem 2.2]{bergweiler-rippon-stallard13} for the analogous result for transcendental entire functions. We reproduce the proof here for completeness.

Let $\phi(t)=\log M(\exp t)$. By property (i), $\phi(t)/t\rightarrow +\infty$ as $t\rightarrow +\infty$, so we can take $t_1\geqslant t_0>0$ large enough that
$$
\phi(t_0)>0 \quad \mbox{ and }\quad \frac{\phi(t)}{t}\geqslant \frac{\phi(t_0)}{t_0}, \mbox{ for } t\geqslant t_1.
$$
Let $\phi'$ denote the right derivative of $\phi$. Then, by property (ii) and the previous inequality,
$$
\phi'(t)\geqslant \frac{\phi(t)-\phi(t_0)}{t-t_0}\geqslant \frac{\varphi(t)}{t}, \mbox{ for } t\geqslant t_1.
$$
Hence $\varphi(t)/t$ is an increasing function for $t\geqslant t_1$. Thus, if $k>1$, then
$$
\frac{\phi(kt)}{kt}\geqslant \frac{\phi(t)}{t}, \mbox{ that is, } \phi(kt)\geqslant k\phi(t),
$$
for $t\geqslant t_1$. Taking exponentials on both sides we get the result, with $R=\exp t_1$.

\item[iv)] For every value of $r>1$ we can write $kr=r^c$, where
$$
c=c(r)=\frac{\log k+\log r}{\log r}>1.
$$
By property (iii), for $r$ large enough,
$$
\frac{M(kr)}{M(r)}=\frac{M(r^c)}{M(r)}\geqslant \frac{M(r)^c}{M(r)}=M(r)^{c-1}
$$
and then, using property (i),
\begin{center}
$
\log \left(M(r)^{c-1}\right)=(c-1)\log M(r)=\dfrac{\log k}{\log r}\log M(r)\rightarrow +\infty\quad \mbox{ as } r\rightarrow +\infty,  
$
\end{center}
so 
$$
\frac{M(kr)}{M(r)}\rightarrow +\infty\quad \mbox{ as } r\rightarrow +\infty.
$$
\end{enumerate}
\end{proof}

The following result compares the iterates of $M(r)$ and $m(r)$ with those of their `relaxed' versions $\mu(r)=\varepsilon M(r)$ and $\nu(r)=m(r)/\varepsilon$, where $0<\varepsilon<1$. The analogous property for entire functions was used by Rippon and Stallard in \cite[Theorem 2.9]{rippon-stallard12}.

\begin{lem}
Let $f$ be a transcendental self-map of $\CS$, and let $\mu(r)=\varepsilon M(r)$ and $\nu(r)=m(r)/\varepsilon$, where $0<\varepsilon<1$. Then there exists $R_1(f,\varepsilon)>0$ such that, for $r\geqslant R_1(f,\varepsilon)$,
$$
\mu^n(r)\geqslant M^n(\varepsilon r)\quad \mbox{ and }\quad \nu^n(r)\leqslant m^n(\varepsilon r),\quad \mbox{ for } n>0,
$$
and, for $0<r\leqslant 1/R_1(f,\varepsilon)$,
$$
\mu^n(r)\geqslant M^n(r/\varepsilon)\quad \mbox{ and }\quad \nu^n(r)\leqslant m^n(r/\varepsilon),\quad \mbox{ for } n>0.
$$
\label{lem:mu}
\end{lem}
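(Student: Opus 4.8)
The plan is to reduce everything to the entire case by exploiting the representation \eqref{eqn:bhat} and the identity $M(r,f)=1/m(r,1/f)=M(1/r,\tilde f)$ that was used throughout Section~\ref{sec:properties-M-m}. In fact I would first treat the statement for $\mu$ as $r\to+\infty$ and then deduce the three remaining statements formally. So the core is: given $0<\eps<1$, produce $R_1=R_1(f,\eps)$ such that $\mu^n(r)\geqslant M^n(\eps r)$ for all $r\geqslant R_1$ and all $n\geqslant 1$, where $\mu(r)=\eps M(r)$.

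The key step is a one-step comparison together with an induction. For the base case, note that by Lemma~\ref{lem:mrf}(iv), $M(r)/M(\eps r)\to+\infty$ as $r\to+\infty$ (write $\eps r = r/k$ with $k=1/\eps>1$ and apply the statement about $M(kr)/M(r)$ at the point $\eps r$), so there is $R_1$ with $M(r)\geqslant \eps^{-1}M(\eps r)$, i.e. $\mu(r)=\eps M(r)\geqslant M(\eps r)$, for $r\geqslant R_1$. For the inductive step I want to know that applying $\mu$ to a point that is $\geqslant M^n(\eps r)$ lands above $M^{n+1}(\eps r)$. This needs two things: first, that $\mu$ is increasing (true for large arguments, since $M$ is eventually increasing, so I should also arrange $R_1$ large enough that $M$ is increasing on $[\eps R_1,\infty)$ and that the iterates $M^n(\eps r)$ and $\mu^n(r)$ stay in that range — they do, because both tend to $+\infty$ and the base case keeps $\mu^n(r)$ above $M^n(\eps r)$); and second, the one-step inequality $\mu(s)\geqslant M(s)$ for $s\geqslant \eps R_1$... but that is too strong. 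The honest argument is: by monotonicity and the inductive hypothesis $\mu^n(r)\geqslant M^n(\eps r)$, we get $\mu^{n+1}(r)=\mu(\mu^n(r))\geqslant \mu(M^n(\eps r))=\eps M(M^n(\eps r))=\eps M^{n+1}(\eps r)$, which is weaker than what we want. To close the gap one instead uses the stronger one-step estimate coming from Lemma~\ref{lem:mrf}(iv) applied at the (large) point $M^n(\eps r)$: since $M^n(\eps r)\geqslant M(\eps r)\geqslant \eps R_1$ can be taken as large as we like by enlarging $R_1$, we have $M(M^n(\eps r))\geqslant \eps^{-1} M(\eps M^n(\eps r))\geqslant \eps^{-1}M(M^n(\eps\cdot r))$ — wait, that is not quite the shape either. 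The clean route is the standard one from \cite[Theorem 2.9]{rippon-stallard12}: prove by induction that $\mu^n(r)\geqslant M^n(\eps r)$ using that $\mu^n(r)/M^n(\eps r)$ is nondecreasing in $n$; concretely, assuming $\mu^n(r)\geqslant M^n(\eps r)$ with $M^n(\eps r)$ large, monotonicity of $M$ gives $M(\mu^n(r))\geqslant M(M^n(\eps r))=M^{n+1}(\eps r)$, and then $\mu^{n+1}(r)=\eps M(\mu^n(r))$, so we need $\eps M(\mu^n(r))\geqslant M^{n+1}(\eps r)$, i.e. $M(\mu^n(r))\geqslant \eps^{-1}M^{n+1}(\eps r)$; this follows from Lemma~\ref{lem:mrf}(iv) at the point $M^{n+1}(\eps r)$ provided $\mu^n(r)\geqslant \eps^{-1} M^{n+1}(\eps r)$. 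That last inequality is exactly the base-case type estimate one bootstraps, and it is where choosing $R_1$ large (so all relevant arguments exceed the threshold from Lemma~\ref{lem:mrf}(iv) with $k=1/\eps$) does the work.

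Once the $\mu$-statement for $r\to+\infty$ is established, I would deduce the $\nu$-statement for $r\to+\infty$ and both statements for $r\to 0$ purely formally, using the substitution $\tilde f(z)=f(1/z)$ and the relations $M(r,f)=M(1/r,\tilde f)=1/m(r,1/f)=1/m(1/r,1/\tilde f)$ recorded before Lemma~\ref{lem:mrf}: applying the already-proved inequality to the appropriate one of $f$, $\tilde f$, $1/f$, $1/\tilde f$ and translating back converts $\mu$ into $\nu$ and $r\to\infty$ into $r\to 0$, with $R_1(f,\eps)$ taken to be the maximum of the four thresholds so obtained. The main obstacle is purely bookkeeping: getting the single $R_1$ that simultaneously (i) makes $M$ monotone on the relevant rays, (ii) validates the base case $\mu(r)\geqslant M(\eps r)$, and (iii) validates the bootstrap inequality for every $n$ — the latter works because enlarging $R_1$ pushes all iterates $M^n(\eps r)$ past any fixed threshold, so no new condition is generated at each step. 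There is no genuine analytic difficulty beyond Lemma~\ref{lem:mrf}; the content is entirely in organizing the induction so that the ratio $\mu^n(r)/M^n(\eps r)$ (and its $\nu$-analogue) does not degrade.
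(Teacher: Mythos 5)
Your proposal lands on essentially the same argument as the paper: use Lemma~\ref{lem:mrf}(iv) with $k=1/\varepsilon$ to get the one-step inequality $M(r)\geqslant \varepsilon^{-2}M(\varepsilon r)$, i.e.\ $\mu(r)\geqslant \varepsilon^{-1}M(\varepsilon r)\geqslant r$ for $r\geqslant R'$, and then iterate, handling the three remaining inequalities by the substitutions $\tilde f$, $1/f$, $1/\tilde f$. The paper's version is considerably shorter because keeping the factor $\varepsilon^{-1}$ in the one-step bound makes the induction close immediately: since $\mu(s)\geqslant s$ keeps iterates above $R'$, and $\mu$ is monotone there, applying $\mu$ to $\mu^{n}(r)\geqslant \varepsilon^{-1}M^{n}(\varepsilon r)$ gives $\mu^{n+1}(r)\geqslant \varepsilon^{-1}M\bigl(\varepsilon\cdot\varepsilon^{-1}M^{n}(\varepsilon r)\bigr)=\varepsilon^{-1}M^{n+1}(\varepsilon r)$, with no fresh appeal to Lemma~\ref{lem:mrf}(iv) at each step. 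One slip to fix: the bootstrap condition you state, $\mu^n(r)\geqslant\varepsilon^{-1}M^{n+1}(\varepsilon r)$, has the wrong exponent and is actually false even for $n=1$ (since $M\bigl(M(\varepsilon r)\bigr)\gg M(r)$); it should read $\mu^n(r)\geqslant\varepsilon^{-1}M^{n}(\varepsilon r)$, which is the invariant the paper implicitly carries.
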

\begin{proof}
Let $R$ be large enough that $M(\varepsilon r)\geqslant \varepsilon r$ for all $r\geqslant R$. By property (iv) in Lemma \ref{lem:mrf}, with $k=1/\varepsilon$, there is $R'\geqslant R$ such that,
$$
\dfrac{M(r)}{M(\varepsilon r)}  \geqslant \frac{1}{\varepsilon^2},\quad \mbox{ for } r\geqslant R', 
$$
and therefore
$$
\mu(r)=\varepsilon M(r)\geqslant \frac{1}{\varepsilon}M(\varepsilon r)\geqslant r,\quad \mbox{ for } r\geqslant R'.
$$
Hence, $\mu^n(r)\geqslant M^n(\varepsilon r)$, for all $n\in \N$ and $r\geqslant R'$. If $R''>0$ is the constant required for the corresponding inequality with $m(r)$ and $\nu(r)$, then we define\linebreak $S:=\max\{R', R''\}$. If $S'>0$ is the constant such that the second pair of inequalities hold for $0<r<S'$, then we put $R_1(f,\varepsilon):=\max\{S, 1/S'\}$.
\end{proof}

Finally let us prove a property of $M(r)$ and $m(r)$ that will be used later in the construction of the annular itineraries.

\begin{lem}
Let $f$ be a transcendental self-map of $\CS$, and let $\mu(r)=\varepsilon M(r)$ and $\nu(r)=m(r)/\varepsilon$, where $0<\varepsilon<1$. Then there exists $R_2(f,\varepsilon)>0$ such that, for $r\geqslant R_2(f,\varepsilon)$, 
$$
M^{n-1}(r)<\varepsilon \mu^{n}(r),\quad \mbox{ for } n>0,
$$
and, for $0<r\leqslant 1/R_2(f,\varepsilon)$,
$$
m^{n-1}(r)> \nu^{n}(r) / \varepsilon,\quad \mbox{ for } n>0.
$$
\label{lem:annular-itineraries}
\end{lem}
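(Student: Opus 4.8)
The plan is to prove only the first inequality, the one for $M(r)$ as $r\to+\infty$; the second, for $m(r)$ as $r\to 0$, then follows by the duality recalled at the beginning of this section, applied to $F(z)=1/f(1/z)$. Indeed $M(r,F)=1/m(1/r,f)$ and $\mu_F(r)=\varepsilon M(r,F)=1/\nu_f(1/r)$, and an easy induction gives $M^{n-1}(r,F)=1/m^{n-1}(1/r,f)$ and $\mu_F^{\,n}(r)=1/\nu_f^{\,n}(1/r)$; hence the first inequality applied to $F$ translates directly into the second inequality for $f$, provided $R_2(f,\varepsilon)$ is taken $\geqslant R_2(F,\varepsilon)$.

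For the first inequality I would rewrite the target as $\mu^n(r)>M^{n-1}(r)/\varepsilon$ and argue by induction on $n\geqslant 1$. First fix $R_2=R_2(f,\varepsilon)$ large enough that on $[R_2,+\infty)$ the following hold: $M$ is increasing (valid for large $r$ by Lemma \ref{lem:mrf}(i)--(ii)); $M(r)>r/\varepsilon^2$, so that $\mu(r)=\varepsilon M(r)>r$ and $\mu$ maps $[R_2,+\infty)$ into itself; and $M(r/\varepsilon)/M(r)>1/\varepsilon^2$. This last requirement is the substantive one: by Lemma \ref{lem:mrf}(iv) with $k=1/\varepsilon>1$ we have $M(r/\varepsilon)/M(r)\to+\infty$ as $r\to+\infty$, and this is precisely the multiplicative gain that lets one extra application of $\mu$, with its damping factor $\varepsilon$, outpace a single application of $M$.

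The base case $n=1$ reads $\mu(r)=\varepsilon M(r)>r/\varepsilon$, i.e. $M(r)>r/\varepsilon^2$, which holds by the choice of $R_2$. For the inductive step, suppose $\mu^n(r)>M^{n-1}(r)/\varepsilon$ for all $r\geqslant R_2$. Both $\mu^n(r)$ and $M^{n-1}(r)/\varepsilon$ lie in $[R_2,+\infty)$ (using $\mu(t)\geqslant t$ and $M(t)\geqslant t$ there), so monotonicity of $M$ gives $M(\mu^n(r))\geqslant M\bigl(M^{n-1}(r)/\varepsilon\bigr)$; applying the third property of $R_2$ with $s=M^{n-1}(r)\geqslant R_2$ gives $M\bigl(M^{n-1}(r)/\varepsilon\bigr)>M(M^{n-1}(r))/\varepsilon^2=M^n(r)/\varepsilon^2$. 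Multiplying by $\varepsilon^2$ yields $\varepsilon^2 M(\mu^n(r))>M^n(r)$, that is $\varepsilon\mu^{n+1}(r)>M^n(r)$, i.e. $\mu^{n+1}(r)>M^n(r)/\varepsilon$. This closes the induction, and multiplying through by $\varepsilon$ gives $M^{n-1}(r)<\varepsilon\mu^n(r)$ for all $n>0$ and $r\geqslant R_2$.

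I do not anticipate a genuine obstacle here: the only slightly delicate points are checking that the iterates $\mu^n(r)$ and $M^{n-1}(r)$ stay in the range where $M$ is monotone (ensured by $\mu(t)\geqslant t$ and $M(t)\geqslant t$ on $[R_2,+\infty)$) and keeping track of the powers of $\varepsilon$ through the iteration. The one real input is the gain $M(r/\varepsilon)/M(r)\to+\infty$ from Lemma \ref{lem:mrf}(iv) — equivalently, from Lemma \ref{lem:mrf}(i) via the substitution $r/\varepsilon=r^{c(r)}$ used in the proof of (iv).
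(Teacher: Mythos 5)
Your proof is correct, and it takes a somewhat different route from the paper's. The paper deduces the inequality from the previously proved Lemma~\ref{lem:mu}: it introduces $\tilde{\mu}(r)=\varepsilon^2 M(r)$, invokes Lemma~\ref{lem:mu} with parameter $\varepsilon^2$ to get $\tilde{\mu}^{\,n}(r)\geqslant M^n(\varepsilon^2 r)$, combines this with $r\leqslant M(\varepsilon^2 r)$ to obtain $M^{n-1}(r)\leqslant \tilde{\mu}^{\,n}(r)$, and finally bounds $\tilde{\mu}^{\,n}(r)=\varepsilon^2 M(\tilde{\mu}^{\,n-1}(r))<\varepsilon^2 M(\mu^{n-1}(r))=\varepsilon\mu^n(r)$. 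You instead give a self-contained induction directly comparing $\mu^n(r)$ with $M^{n-1}(r)/\varepsilon$, using the growth estimate $M(r/\varepsilon)/M(r)>1/\varepsilon^2$ (Lemma~\ref{lem:mrf}(iv) with $k=1/\varepsilon$) at each step, together with the routine monotonicity and domain-stability checks. The two arguments ultimately rest on the same ingredient, namely Lemma~\ref{lem:mrf}(iv), since the paper's Lemma~\ref{lem:mu} is itself proved from it; your version simply inlines the argument rather than routing through Lemma~\ref{lem:mu}, which makes it slightly more elementary and self-contained, at the cost of redoing a bookkeeping induction that the paper recycles. Your duality reduction of the $m/\nu$ case via $F(z)=1/f(1/z)$ is exactly the device the paper gestures at when it sets $R_2(f,\varepsilon):=\max\{R,1/R'\}$, so that part matches the paper in substance; it is good that you wrote out the conjugation identities $M^{n-1}(r,F)=1/m^{n-1}(1/r,f)$ and $\mu_F^{\,n}(r)=1/\nu_f^{\,n}(1/r)$ explicitly, since the paper leaves them implicit.
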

\begin{proof}
Consider $\tilde{\mu}(r)=\varepsilon^2M(r)$ and let $R_1(f,\varepsilon^2)>0$ be the constant defined in Lemma \ref{lem:mu}. Then
$$
\tilde{\mu}^n(r)\geqslant M^n(\varepsilon^2r),
$$
for all $n\in\N$ and $r\geqslant R_1(f,\varepsilon)$. Now let $R>R_1(f,\varepsilon)$ be large enough that $r\leqslant M(\varepsilon^2 r)$ for all $r\geqslant R$. Then, applying $M^{n-1}$ to both sides of $r\leqslant M(\varepsilon^2 r)$, we get
$$
M^{n-1}(r)\leqslant M^n(\varepsilon^2 r)\leqslant \tilde{\mu}^{n}(r),
$$
for $r\geqslant R$. Hence,
$$
M^{n-1}(r)\leqslant \tilde{\mu}^{n}(r)=\varepsilon^2M\bigl(\tilde{\mu}^{n-1}(r)\bigr)<\varepsilon^2M\bigl(\mu^{n-1}(r)\bigr)=\varepsilon \mu^n(r),
$$
for all $n\in\N$ and $r\geqslant R$. If $R'>0$ is the constant required for the corresponding inequality with $m(r)$ and $\nu(r)$, then the required result holds with\linebreak $R_2(f,\varepsilon):=\max\{R,1/R'\}$. 
\end{proof}

\section{The escaping and fast escaping sets}

\label{sec:escaping-set}

In this section we discuss some basic properties of the escaping and fast escaping sets of transcendental self-maps of $\CS$. Recall that in Definition \ref{dfn:essential-itinerary} we defined the \textit{essential itinerary} of an escaping point $z\in I(f)$ to be the symbol sequence $e=(e_n)\in \{0,\infty\}^\mathbb N$ such that
$$
e_n=\left\{
\begin{array}{ll}
0, & \mbox{ if } |f^n(z)|\leqslant 1,\vspace{10pt}\\
\infty, & \mbox{ if } |f^n(z)|> 1,
\end{array}
\right.
$$
and $I_e(f)$ denotes the set of points whose essential itinerary is eventually a shift of $e$, that is,
$$
I_e(f):=\{z\in I(f)\ :\ \exists\ell,k\in\N,\ \forall n\geqslant 0,\ |f^{n+\ell}(z)|>1~\Leftrightarrow~ e_{n+k}=\infty\}.
$$

\begin{rmk}
\begin{enumerate}
\item[i)] Observe that we used the unit circle to define the boundaries of neighbourhoods of zero and infinity but we could have used any circle $\{z\ :\ |z|=R\}$ with $R>0$, because the orbits of escaping points are eventually as close as we want to the essential singularities.
\item[ii)] These sets $I_e(f)$ are not always disjoint. In fact, $I_{e}(f)=I_{e'}(f)$ if and only if $\sigma^m(e)=\sigma^n(e')$ for some $m,n\in\N$, where $\sigma$ denotes the Bernoulli shift map. In this case we say that $e$ is \textit{equivalent} to $e'$ and write $e\cong e'$. However, it is easy to see that there are uncountably many non-equivalent essential itineraries.
\end{enumerate}
\end{rmk}

%

Recall that in Definition \ref{dfn:fast-escaping} we said that a point $z\in\C^*$ is \emph{fast escaping} if there are $\ell,k\in\N$ such that, for all $n\in\N$,\\
\begin{tabular}{l}
\tabitem $|f^{n+\ell}(z)| \leqslant R_{n+k}$, if $e_{n+k}=0$,\\
\tabitem $|f^{n+\ell}(z)| \geqslant R_{n+k}$, if $e_{n+k}=\infty$,
\end{tabular}
\hfill\stepcounter{equation}(\theequation)\\
\addtocounter{equation}{-1}
where $(R_n)$ is a sequence of positive numbers defined using the iterates of $M(r)$ and $m(r)$ according to the essential itinerary $e\in\{0,\infty\}^\N$. We now define
$$
A_{e}^{-\ell}(f,R_0):=\{z\in\C\ :\ \exists k\in\N \mbox{ such that } \refstepcounter{equation}(\theequation) \mbox{ holds for all } n\in\N\},
$$
which is a closed set. Note that $A_{e}^{-\ell}(f,R_0)=f^{-\ell}\bigl(A_{e}^0(f,R_0)\bigr)$, and since $A_{e}^{-\ell}(f,R_0)\linebreak \subseteq A_{e}^{-(\ell+1)}(f,R_0)$, the set
$$
A_e(f):=\bigcup_{\ell\in\N} A_{e}^{-\ell}(f,R_0)
$$
is a nested union of closed sets. Finally $A(f)$ stands for all the fast escaping points regardless of their essential itinerary.


The following two lemmas show that, with the definition above, the set $A(f)$ has appropriate properties for the fast escaping set.

\begin{lem}
Let $f$ be a transcendental self-map of $\CS$, and let $e\in\{0,\infty\}^\N$. There is $R(f)>0$ large enough such that if $(R_n)$ is as in Definition \ref{dfn:fast-escaping} and $R_0>R(f)$ or $R_0<1/R(f)$, then $R_n\rightarrow \{0,+\infty\}$ as $n\rightarrow \infty$. Hence, $A_e(f)\subseteq I_e(f)$.
\label{lem:Af-well-defined}
\end{lem}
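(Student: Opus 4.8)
The plan is to extract from Lemma~\ref{lem:mrf}(i) a threshold $R(f)$ under which the punctured region $\{1<|z|<R(f)\}\cup\{1/R(f)<|z|<1\}$ is "transparent": any orbit of the recursion defining $(R_n)$ that starts outside it is pushed further away from the unit circle, with the modulus of the logarithm at least doubling at each step. The divergence of $(R_n)$ and the inclusion $A_e(f)\subseteq I_e(f)$ then both follow.

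Concretely, I would first use the four limits in Lemma~\ref{lem:mrf}(i) to fix $R(f)>1$ large enough that
\begin{itemize}
\item $M(r)\geqslant r^{2}$ and $m(r)\leqslant r^{-2}$ for all $r\geqslant R(f)$;
\item $M(r)\geqslant r^{-2}$ and $m(r)\leqslant r^{2}$ for all $0<r\leqslant 1/R(f)$.
\end{itemize}
Writing $G:=(0,1/R(f))\cup(R(f),+\infty)$, a short case analysis (according to which component of $G$ the argument lies in, and according to whether $M$ or $m$ is applied) then gives $M(G)\subseteq(R(f),+\infty)$ and $m(G)\subseteq(0,1/R(f))$, so $G$ is forward invariant under both maps, and $|\log M(r)|\geqslant 2|\log r|$, $|\log m(r)|\geqslant 2|\log r|$ for every $r\in G$. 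Consequently, if $R_0>R(f)$ or $R_0<1/R(f)$ then $R_0\in G$ with $\log R_0\neq 0$, and since each step of the recursion in Definition~\ref{dfn:fast-escaping} applies $M$ or $m$, an induction yields $R_n\in G$ and $|\log R_n|\geqslant 2^{\,n}|\log R_0|\to\infty$; that is, $R_n\to\{0,+\infty\}$.

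For the inclusion $A_e(f)\subseteq I_e(f)$ I would use two further consequences of the same case analysis: for every $n\geqslant 1$ one has $R_n>1$ exactly when $e_n=\infty$ and $R_n<1$ exactly when $e_n=0$ (since $R_n$ equals $M(R_{n-1})\in(R(f),\infty)$ or $m(R_{n-1})\in(0,1/R(f))$, using $R_{n-1}\in G$); and $|\log R_m|\to\infty$ from the previous paragraph. Now let $z\in A_e^{-\ell}(f,R_0)$ and fix a witnessing $k$. For all $n$ with $n+k\geqslant 1$ the defining inequalities give $|f^{n+\ell}(z)|\geqslant R_{n+k}>1$ when $e_{n+k}=\infty$ and $|f^{n+\ell}(z)|\leqslant R_{n+k}<1$ when $e_{n+k}=0$; in particular $|f^{n+\ell}(z)|>1\Leftrightarrow e_{n+k}=\infty$. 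Given $N>0$, choose $m_0$ with $|\log R_m|>N$ for $m\geqslant m_0$; then for $n+k\geqslant m_0$ the two cases give $\log|f^{n+\ell}(z)|\geqslant|\log R_{n+k}|>N$, respectively $\log|f^{n+\ell}(z)|\leqslant-|\log R_{n+k}|<-N$, so $|\log|f^{n+\ell}(z)||>N$. Hence $f^n(z)\to\{0,\infty\}$, i.e.\ $z\in I(f)$, and the equivalence above — which holds for all large $n$, hence for all $n\geqslant 0$ after increasing $\ell$ and $k$ by a common amount — shows $z\in I_e(f)$. Since $\ell$ was arbitrary, $A_e(f)=\bigcup_\ell A_e^{-\ell}(f,R_0)\subseteq I_e(f)$.

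The dynamical heart of the argument (forward invariance of $G$ and exponential divergence of $|\log R_n|$) is an immediate consequence of Lemma~\ref{lem:mrf}(i) and is not the difficulty. The only point requiring genuine care is keeping the index bookkeeping between the reference sequence $(R_n)$ and the itinerary $(e_n)$, together with the shifts $\ell,k$, consistent — this is exactly what the third paragraph handles, and it is where a careless reading of the definitions could go wrong.
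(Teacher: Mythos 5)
Your proof is correct and takes essentially the same route as the paper: choose $R(f)$ from Lemma~\ref{lem:mrf}(i) so that $M(r)\geqslant r^2$, $m(r)\leqslant r^{-2}$ for $r\geqslant R(f)$ (and symmetrically for $0<r\leqslant 1/R(f)$), which makes $(0,1/R(f))\cup(R(f),\infty)$ forward invariant under the recursion and forces $|\log R_n|\to\infty$. The paper states this more tersely and treats $A_e(f)\subseteq I_e(f)$ as immediate, whereas you spell out the geometric bound $|\log R_n|\geqslant 2^n|\log R_0|$ and the index bookkeeping, which is a faithful (and more careful) rendering of the same argument.
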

\begin{proof}
Since $M(r)$ and $1/m(r)$ grow faster than any power of $r$ (see Lemma \ref{lem:mrf} (i)),\linebreak we can take $R=R(f)>0$ large enough that $M(r)>r^2$ and $1/m(r)>r^2$ if $r>R$,\linebreak and $1/M(r)<r^2$ and $m(r)<r^2$ if $0<r< 1/R$. Therefore if $R_0\in (0,+\infty)\setminus [1/R, R]$, then $R_n\rightarrow \{0,+\infty\}$ as $n\rightarrow \infty$, as required.
\end{proof}

\begin{lem}
Let $f$ be a transcendental self-map of $\CS$. For each $e\in \{0,\infty\}^\N$, $A_e(f)$ is completely invariant and independent of $R_0$, where $R_0$ satisfies the assumptions in Definition \ref{dfn:fast-escaping}. Hence, $A(f)$ is completely invariant and independent of $R_0$.
\label{lem:Af-compl-inv-and-indep-of-R}
\end{lem}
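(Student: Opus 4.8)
The plan is to prove the two assertions — complete invariance and independence of $R_0$ — essentially in parallel, following the template used for transcendental entire functions (cf. \cite{rippon-stallard12}). Fix $e\in\{0,\infty\}^\N$ and let $R_0, R_0'$ be two admissible starting values as in Definition \ref{dfn:fast-escaping}, giving rise to sequences $(R_n)$ and $(R_n')$ respectively. I would first reduce the independence statement to comparing these two sequences: since each $A_e^{-\ell}(f,R_0)$ is defined by the inequalities \eqref{eqn:bhat}-style conditions relative to $(R_n)$, and $A_e(f)=\bigcup_\ell A_e^{-\ell}(f,R_0)$ is a nested union, it suffices to show that for any $z$ satisfying the fast-escaping inequalities with respect to $(R_n)$ from some index on, one can find a (possibly larger) shift $k'$ and starting index so that the inequalities hold with respect to $(R_n')$, and vice versa. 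The key tool here is that the iterated maximum/minimum modulus functions grow (resp.\ decay) faster than any iterate of a comparable but slightly smaller (resp.\ larger) function: this is exactly the content of Lemma \ref{lem:mu}, applied with a suitable $\varepsilon<1$ chosen so that $\varepsilon R_0' \leqslant R_0$ (in the $+\infty$ regime) or $R_0'/\varepsilon \geqslant R_0$ (in the $0$ regime). Interlacing the two regimes according to the symbol sequence $e$, Lemma \ref{lem:mu} gives $R_n' \leqslant R_{n+j}$ (with the appropriate inequality flipped when $e_n=0$) for all large $n$ and some fixed shift $j$; feeding this into the defining inequalities for $A_e^{-\ell}(f,R_0)$ shows $A_e^{-\ell}(f,R_0)\subseteq A_e^{-\ell}(f,R_0')$ for $\ell$ large, and symmetry gives the reverse inclusion after taking unions. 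Hence $A_e(f)$ is independent of $R_0$.

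For complete invariance, I would argue that $A_e(f)$ is forward invariant and backward invariant separately. Backward invariance is immediate from the structure $A_e(f)=\bigcup_\ell A_e^{-\ell}(f,R_0)$ together with the identity $A_e^{-\ell}(f,R_0)=f^{-\ell}(A_e^0(f,R_0))$ noted in the excerpt: if $f(w)=z\in A_e(f)$, then $z\in A_e^{-\ell}(f,R_0)$ for some $\ell$, so $w\in f^{-(\ell+1)}(A_e^0(f,R_0))=A_e^{-(\ell+1)}(f,R_0)\subseteq A_e(f)$. For forward invariance, if $z\in A_e^{-\ell}(f,R_0)$ then $f(z)$ satisfies the orbit inequalities with $\ell$ replaced by $\ell-1$ when $\ell\geqslant 1$, so $f(z)\in A_e^{-(\ell-1)}(f,R_0)\subseteq A_e(f)$; the only case needing care is $\ell=0$, where one uses that $|z|\geqslant R_0$ (resp.\ $\leqslant R_0$) with $R_0$ large (resp.\ small) and the defining property of $M(r)$, $m(r)$ to check that $|f(z)|$ still respects the shifted bound $R_1$ — or, more cleanly, one simply observes $A_e^0(f,R_0)\subseteq A_e^{-1}(f,R_0)$ after possibly enlarging $R_0$, which is harmless by the independence just proved. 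Combining both directions gives complete invariance of $A_e(f)$, and since $A(f)=\bigcup_{e} A_e(f)$ (over, say, one representative per equivalence class $\cong$), both complete invariance and independence of $R_0$ pass to $A(f)$.

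The main obstacle I anticipate is bookkeeping the two interleaved regimes cleanly: the sequence $(R_n)$ switches between applying $M$ (when $e_{n+1}=\infty$) and $m$ (when $e_{n+1}=0$), so the monotonicity one wants — "$(R_n)$ dominates $(R_n')$ eventually in the sense appropriate to the current symbol" — has to be established uniformly across the switches. Lemma \ref{lem:mu} handles each pure regime, but one needs that the comparison constant ($\varepsilon$, or the shift $j$) can be taken uniform over all the finitely-many switching patterns that matter, and that once a comparison $R_n'\leqslant R_{n+j}$ (appropriately oriented) is achieved it is preserved under a further application of $M$ or $m$. This preservation is where monotonicity of $M(r)$ for large $r$ and of $m(r)$ for small $r$ (consequences of unimodality, Lemma \ref{lem:mrf}) is used repeatedly. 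Everything else is routine chasing of the definitions.
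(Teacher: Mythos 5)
Your complete-invariance argument is essentially the paper's: the paper states $f\bigl(A_e^{-\ell}(f,R_0)\bigr)=A_e^{-\ell+1}(f,R_0)$ and takes the nested union, which is what your forward/backward split amounts to, so that part is fine.

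The independence-of-$R_0$ argument has a real gap, and it is exactly where you anticipated trouble. Lemma~\ref{lem:mu} is not the right tool: it compares the iterates of the \emph{relaxed} function $\mu=\varepsilon M$ with the iterates of $M$ started from the perturbed point $\varepsilon r$; it says nothing of the form $M^n(R_0')\leqslant M^{n+j}(R_0)$ for two different admissible radii, and still less so for a mixed $M/m$-sequence. More importantly, even if you had a bound of the shape $R_n'\leqslant R_{n+j}$, the shift by $j$ misaligns the symbol sequences: $R_{n+j}$ is produced by applying $M$ or $m$ according to $e_1,\hdots,e_{n+j}$, whereas $R_n'$ follows $e_1,\hdots,e_n$, so the two recursions do not chain unless $\sigma^j(e)=e$. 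Your paragraph on ``bookkeeping the interleaved regimes'' flags this, but does not resolve it, and there is no uniform shift $j$ that works for a general $e$.

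The paper resolves this with a device you don't use. For one inclusion it needs only that $M$ is increasing and $m$ decreasing in the relevant range: if $R_0'>R_0$ then $R_n'$ dominates $R_n$ in the appropriate sense for every $n$ with the \emph{same} symbol sequence, giving $A_e^{-\ell}(f,R_0')\subseteq A_e^{-\ell}(f,R_0)$ directly. For the reverse, since $R_{n_k}\rightarrow\infty$ along some subsequence, pick $m$ with $R_m>R_0'$ and pass from $A_e^{-\ell}(f,R_0')$ to $A_e^{-\ell}(f,R_m)$ by monotonicity, and then from $A_e^{-\ell}(f,R_m)$ back to $A_{\tilde e}^{m-\ell}(f,R_0)$, where $\tilde e$ is $e$ preceded by the prefix $e_1\cdots e_m$. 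The key point is that $\sigma^m(\tilde e)=e$, so $A_{\tilde e}(f)=A_e(f)$ by the ``eventual shift'' $k$ already built into Definition~\ref{dfn:fast-escaping}. This is precisely the mechanism that absorbs the symbol-sequence misalignment your shift-by-$j$ approach runs into; without it the argument does not close.
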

\begin{proof}
The set $A_e(f)$ is completely invariant by constuction, because if $1-\ell<0$, then
$$
f\bigl(A_e^{-\ell}(f,R_0)\bigr)=A_{e}^{-\ell+1}(f,R_0).
$$
We give the details that $A_e(f)$ is independent of $R_0$ for the case where there exists a sequence $(n_k)$ such that $R_{n_k}\rightarrow +\infty$ as $k\rightarrow \infty$. The argument when $R_n\rightarrow 0$ as $n\rightarrow \infty$ is similar. 

Suppose that $R_0'>R_0$ and let $(R_n)$ and $(R_n')$ be the sequences given by Definition \ref{dfn:fast-escaping} starting with $R_0$ and $R_0'$ respectively. Then $R_n'\rightarrow \{0,\infty\}$ as $n\rightarrow \infty$. Since $M(r)$ and $m(r)$ are unimodal functions, for large enough values of $r$ they are respectively increasing and decreasing (and the opposite when $r$ is small enough). Therefore, $A_{e}^{-\ell}(f,R_0')\subseteq A_{e}^{-\ell}(f,R_0)$ and hence
$$
\bigcup_{\ell\in\N} A_e^{-\ell}(f,R_0')\subseteq \bigcup_{\ell\in\N} A_e^{-\ell}(f,R_0).
$$
In the other direction, we use the fact that we have assumed that there is a sequence $(n_k)$ such that $R_{n_k}\rightarrow +\infty$ as $k\rightarrow\infty$. Let $m\in \N$ be such that $R_m>R_0'$. Then
$$
\bigcup_{\ell\in\N} A_e^{-\ell}(f,R_0') \supseteq \bigcup_{\ell\in\N} A_{e}^{-\ell}(f,R_m) \supseteq \bigcup_{\ell-m\in\N} A_{\tilde{e}}^{m-\ell}(f,R_0)= \bigcup_{\ell\in\N} A_{\tilde{e}}^{-\ell}(f,R_0),
$$
where $\tilde{e}$ is the symbol sequence $e$ preceded by the string $e_1\hdots e_m$. Since $\sigma^m(\tilde{e})=e$, we have
$$
A_{\tilde{e}}^{-\ell}(f,R_0)=A_e^{-\ell}(f,R_0)
$$
and therefore $A_e(f)$ is independent of the value of $R_0$.
\end{proof}



We will continue studying the dynamical and topological properties of $A(f)$ in Section \ref{sec:fast-escaping-set}.

\section{Annular itineraries}

\label{sec:annular-itineraries}

In this section we study annular itineraries for our class of functions. By Lemma~\ref{lem:mrf}, there exist $R_+,R_->0$ respectively large and small enough such that $M^n(R_+)\rightarrow +\infty$ and $m^n(R_-)\rightarrow 0$ as $n\rightarrow \infty$. We define $A_0:=A(R_-,R_+)$ and 
$$
\begin{array}{ll}
A_n:=\{z\in\C^*\ :\ M^{n-1}(R_+)\leqslant |z|< M^n(R_+)\}, & \mbox{for } n>0,\vspace{10pt}\\
A_n:=\{z\in\C^*\ :\ m^{-n}(R_-)<|z|\leqslant m^{-n+1}(R_-)\},  & \mbox{for } n<0,
\end{array}
$$
so that $\{A_n\}$ is a partition of $\C^*$. Each point $z\in I(f)$ has an associated \textit{annular itinerary} $(s_n)\in \Z^\N$ such that $f^n(z)\in A_{s_n}$. Note that this sequence depends on the values $R_-$ and $R_+$ used to define the partition. By construction, it follows from the maximum modulus principle that
$$
\begin{array}{rl}
s_{n+1}\leqslant s_n+1, & \mbox{ if } s_n>0,\vspace{10pt}\\
s_{n+1}\geqslant s_n-1, & \mbox{ if } s_n<0.
\end{array}
$$
To create escaping orbits with certain types of annular itineraries we will use the following version of a well-known result.

\begin{lem}
Let $C_m$, $m\geqslant 0$, be compact sets in $\C^*$ and $f:\C^*\rightarrow\C^*$ be a continuous function such that 
$$
f(C_m)\supseteq C_{m+1},\quad \mbox{ for } m\geqslant 0.
$$
Then $\exists \zeta$ such that $f^m(\zeta)\in C_m$, for $m\geqslant 0$.
\label{lem:covering}
\end{lem}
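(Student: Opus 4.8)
The plan is to use a nested-sequence-of-compact-sets argument, which is the standard route for this kind of "itinerary realisation" lemma (often attributed to Baker, or to the $\lambda$-lemma / Schubart-type arguments in complex dynamics). First I would, for each $N\geqslant 0$, consider the set
$$
E_N:=\{z\in C_0\ :\ f^m(z)\in C_m\ \text{for all}\ 0\leqslant m\leqslant N\}.
$$
The core claim is that each $E_N$ is a nonempty compact subset of $C_0$ and that $E_{N+1}\subseteq E_N$; granting this, $\bigcap_{N\geqslant 0}E_N$ is nonempty by the finite intersection property for compact sets, and any $\zeta$ in this intersection satisfies $f^m(\zeta)\in C_m$ for all $m\geqslant 0$, which is the conclusion.

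The nesting $E_{N+1}\subseteq E_N$ is immediate from the definition. Compactness of $E_N$ follows because $E_N$ is a closed subset of the compact set $C_0$: it is the intersection $C_0\cap\bigcap_{m=1}^N (f^m)^{-1}(C_m)$, and each $(f^m)^{-1}(C_m)$ is closed since $f$ (hence $f^m$) is continuous and $C_m$ is closed. The substantive point is nonemptiness, which I would prove by downward induction on the "depth". Concretely, define for $0\leqslant m\leqslant N$ the sets
$$
D^{(N)}_m:=\{w\in C_m\ :\ f^{m'-m}(w)\in C_{m'}\ \text{for all}\ m\leqslant m'\leqslant N\},
$$
so $D^{(N)}_N=C_N\neq\emptyset$ and $E_N=D^{(N)}_0$. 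The inductive step is: if $D^{(N)}_{m+1}\neq\emptyset$, then $D^{(N)}_m\neq\emptyset$. This is exactly where the hypothesis $f(C_m)\supseteq C_{m+1}$ is used: pick any $w\in D^{(N)}_{m+1}\subseteq C_{m+1}\subseteq f(C_m)$, so there exists $v\in C_m$ with $f(v)=w$; then for $m+1\leqslant m'\leqslant N$ we have $f^{m'-m}(v)=f^{m'-(m+1)}(w)\in C_{m'}$, and $v\in C_m$, so $v\in D^{(N)}_m$. Hence by induction $E_N=D^{(N)}_0\neq\emptyset$ for every $N$.

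The only mild subtlety — and the step I would flag as the place to be careful rather than a genuine obstacle — is the passage from "each $E_N$ nonempty" to "$\bigcap E_N$ nonempty". This needs the $E_N$ to be nonempty \emph{compact} sets with the finite intersection property, all sitting inside the fixed compact set $C_0$; the finite intersection property holds because the family is nested and each member is nonempty, and then compactness of $C_0$ gives $\bigcap_{N}E_N\neq\emptyset$. (No completeness or metric hypothesis on $\C^*$ beyond the ambient topology is needed, and $\C^*$ with its usual topology is perfectly adequate since all the $C_m$ are compact there.) Any $\zeta\in\bigcap_N E_N$ then has $\zeta\in C_0$ and $f^m(\zeta)\in C_m$ for every $m\geqslant 0$, completing the proof.
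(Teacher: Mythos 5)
The paper states this lemma as ``a well-known result'' and gives no proof of its own, so there is nothing to compare against. Your argument is the standard nested-compact-sets proof and it is correct: the downward induction via the sets $D^{(N)}_m$ (equivalently, pulling a point of $C_N$ back along a chain of preimages through $C_{N-1},\dots,C_0$) uses the covering hypothesis $f(C_m)\supseteq C_{m+1}$ exactly where needed to show each $E_N$ is nonempty; continuity of $f^m$ and closedness of the $C_m$ give that $E_N$ is closed in $C_0$, hence compact; and the finite intersection property for the nested nonempty compacta $E_N\subseteq C_0$ yields the desired $\zeta$. The only implicit assumption is that each $C_m$ is nonempty, which the lemma's phrasing also takes for granted and which holds in the intended application where the $C_m$ are closed annuli.
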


In our construction, the compact sets will be compact annuli $B_n\subseteq A_n$ with some covering properties. 

\begin{thm}
Let $f$ be a transcendental self-map of $\C^*$. If $\{A_n\}$ is the set of annuli defined above, then there exists a sequence of closed annuli $B_n\subseteq A_n$, $n\in\Z$, with the following covering properties:
\begin{itemize}
\item if $n>0$, $\exists k_n\leqslant 1$ such that $f(B_n)\supseteq B_{k}$ for $k_n\leqslant k\leqslant n+1$,
\item if $n<0$, $\exists k_n\geqslant -1$ such that $f(B_n)\supseteq B_{k}$ for $n-1\leqslant k\leqslant k_n$,
\end{itemize}
and $|k_m|\geqslant |k_n|$ when $|m|>|n|$ and $m,n$ have the same sign. 
\label{thm:annular-itineraries}
\end{thm}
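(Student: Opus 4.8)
The plan is to construct the annuli $B_n$ by a recursion working outwards from the centre, the engine being an annular covering lemma for transcendental self-maps of $\C^*$.

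First I would isolate the covering lemma. By \eqref{eqn:bhat} we may write $f(z)=z^N e^{g(z)+h(1/z)}$ with $N\in\Z$ and $g,h$ non-constant entire, so $f$ is zero-free on all of $\C^*$ and the image of each circle $\{|z|=\rho\}$ under $f$ is a closed curve of winding number $N$ about $0$. The lemma I would prove states that there is a constant $C=C(f)>1$ such that, for all sufficiently large $\rho$, the image $f(\overline{A}(\rho,2\rho))$ contains the round annulus $\overline{A}\bigl(C\,m(2\rho,f),\,M(2\rho,f)/C\bigr)$, and, for all sufficiently small $\rho$, $f(\overline{A}(\rho/2,\rho))$ contains $\overline{A}\bigl(C\,m(\rho/2,f),\,M(\rho/2,f)/C\bigr)$. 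Its proof combines the argument principle with the wide oscillation of $|f|$ on circles: for every $\phi$ the preimage in $\overline{A}(\rho,2\rho)$ of the ray $\{\arg w=\phi\}$ contains an arc joining the two boundary circles along which $|f|$ runs from near $m(2\rho,f)$ up to near $M(2\rho,f)$, so by the intermediate value theorem every $w$ of intermediate modulus lies in the image. When $N\neq 0$ one can instead count zeros of $f-w$ via a winding argument; the case $N=0$ is handled directly from $f=e^{g(z)+h(1/z)}$ using the growth of $g$. (Alternatively one could invoke the known annular covering lemmas for transcendental entire functions applied to $\tilde g(z)=z^N e^{g(z)}$, since $f$ behaves like $e^{h(0)}\tilde g$ near $\infty$.) The essential point, via Lemma \ref{lem:mrf}, is that the image annuli produced are enormously fat.

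Granting this, take $B_0$ to be any closed round annulus in $A_0$ and, for $n>0$, put $B_n:=\overline{A}\bigl(M^{n-1}(R_+),\,2M^{n-1}(R_+)\bigr)$, which lies in $A_n$ once $R_+$ is large since $M(\rho)>2\rho$ for $\rho$ large (Lemma \ref{lem:mrf}(i)). Applying the covering lemma with $\rho=M^{n-1}(R_+)$, the outer radius $M(2M^{n-1}(R_+),f)/C$ of the round annulus that $f(B_n)$ covers exceeds $2M^n(R_+)$ by Lemma \ref{lem:mrf}(iv), so $f(B_n)\supseteq B_{n+1}$; moreover every point of $f(B_n)$ has modulus at most $M(2M^{n-1}(R_+),f)<M(M^n(R_+))=M^{n+1}(R_+)$ (because $2\rho<M(\rho)$), so $f(B_n)$ misses $A_{n+2}$ and $B_{n+1}$ is the outermost annulus it covers, which is why the upper end of the $k$-range is $n+1$. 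The inner radius $C\,m(2M^{n-1}(R_+),f)$ tends to $0$ as $n\to\infty$, so $f(B_n)$ also contains $B_k$ for every $k$ from some $k_n$ up to $n+1$; and since $m(2M^n(R_+),f)$ is vastly smaller than $m(2M^{n-1}(R_+),f)$ (Lemma \ref{lem:mrf}(iv) again), $f(B_{n+1})$ reaches strictly further inwards than $f(B_n)$, so $k_n$ is non-increasing. Taking $R_+$ large enough that $C\,m(2R_+,f)<R_-$ makes $f(B_1)$, and hence every $f(B_n)$ with $n\geq 1$, contain $\overline{A}(R_-,R_+)\supseteq B_0$; in particular $k_n\leq 0\leq 1$ and all the required inclusions are mutually consistent.

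The case $n<0$ is the mirror image near $0$: using the near-$0$ statements of Lemma \ref{lem:mrf} (equivalently, the substitution $\tilde f(z)=f(1/z)$ and the identities $M(r,f)=1/m(1/r,1/\tilde f)$ of Section \ref{sec:properties-M-m}), set $B_n:=\overline{A}\bigl(\tfrac12 m^{|n|-1}(R_-),\,m^{|n|-1}(R_-)\bigr)$ and repeat the argument, obtaining $f(B_n)\supseteq B_k$ for $n-1\leq k\leq k_n$ with $k_n\geq -1$ and $k_n$ non-decreasing as $n\to-\infty$. Putting the two sides together gives $|k_m|\geq|k_n|$ whenever $|m|>|n|$ and $m,n$ have the same sign. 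The hard part is the covering lemma of the second paragraph: since $|f|$ oscillates wildly on every circle there is no annulus on which $f$ grows monotonically, so the naive boundary-winding form of the argument principle does not apply directly, and the winding-number-zero case $N=0$ must be treated separately; once that lemma is available, the rest is bookkeeping, with the super-exponential growth and decay in Lemma \ref{lem:mrf} comfortably absorbing the constant $C$.
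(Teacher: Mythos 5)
Your global strategy — choose closed annuli $B_n\subseteq A_n$ and verify the covering relations, with the top of the $k$-range pinned at $n+1$ by the maximum modulus principle — matches the paper, and the bookkeeping in your last two paragraphs is essentially right. The genuine gap is in the covering lemma you announce and then only sketch.

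You claim that for sufficiently large $\rho$ one has $f\bigl(\overline{A}(\rho,2\rho)\bigr)\supseteq\overline{A}\bigl(C\,m(2\rho),M(2\rho)/C\bigr)$, and justify it by saying that the preimage of each ray $\{\arg w=\phi\}$ contains an arc joining the two boundary circles of $\overline{A}(\rho,2\rho)$ along which $|f|$ sweeps from roughly $m(2\rho)$ to roughly $M(2\rho)$. Neither half of this is justified: level sets of $\arg f$ in an annulus need not contain crosscuts joining the two boundary circles, and even when they do there is no reason $|f|$ traverses the full range $[m(2\rho),M(2\rho)]$ along such an arc rather than oscillating within a narrow band. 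The winding-number count for $N\neq 0$ is likewise not carried out. This is exactly the technical core of the theorem; the paper does not attempt to reprove it but instead imports the Bergweiler--Rippon--Stallard covering lemma (Lemma \ref{lem:annuli-covering}), which is a nontrivial recent result whose proof rests on hyperbolic-geometric modulus estimates, not on the intermediate value theorem.

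Your suggested fallback — invoke a known annular covering lemma — also fails with the annuli you chose. The BRS lemma requires two points $z_1,z_2$ with $|f(z_1)|=m$, $|f(z_2)|=M$ that are within hyperbolic distance $\delta$ of each other \emph{inside the domain annulus}, where $\delta$ is a small absolute constant. Your annuli $\overline{A}(M^{n-1}(R_+),2M^{n-1}(R_+))$ have fixed conformal modulus $\log 2$; the hyperbolic length of their central circle is $\pi^2/(\tfrac12\log 2)\approx 28.5$, so two generic points on the central circle are nowhere near within distance $\delta$, and the lemma does not apply. This is precisely why the paper does \emph{not} take annuli of bounded modulus. Instead it sets $\mu(r)=\varepsilon M(r)$ and $B_n:=\overline{A}\bigl(\varepsilon\mu^n(R_+),\,\mu^n(R_+)/\varepsilon\bigr)$, whose modulus $2\log(1/\varepsilon)$ can be made as large as needed: choosing $\varepsilon$ small enough that $\pi^2/\log(1/\varepsilon)<\delta$ guarantees that any two points on the central circle $\{|z|=\mu^n(R_+)\}$ are within hyperbolic distance $\delta$, after which BRS yields $f(B_n)\supseteq\overline{A}\bigl(m(\mu^n(R_+)),M(\mu^n(R_+))\bigr)$ directly. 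Lemma \ref{lem:annular-itineraries} is then used to verify $B_n\subseteq A_n$ and the nesting $\overline{A}(R_+,\mu^{n+1}(R_+)/\varepsilon)\supseteq\bigcup_k B_k$. So the fix is not more delicate bookkeeping but a different choice of annuli — ones that grow in modulus so that the hyperbolic hypothesis of the quoted lemma is actually met.
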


We compare Theorem \ref{thm:annular-itineraries} with the corresponding result for entire functions \cite[Theorem 1.1]{rippon-stallard13}. In that setting there is a subsequence $(n_j)$ such that\linebreak $f(B_{n_j})\supseteq B_k$ for $0\leqslant k\leqslant n_j+1$ with at most one exception while in our case all\linebreak $f(B_n)$ cover the other $B_k$ with $0<k\leqslant n+1$ if $n>0$. Also the proof in \cite{rippon-stallard13} is significantly more involved than ours due to the possible presence of zeros of the function and multiply-connected Fatou components, and it requires the use of several new covering lemmas. 

In order to prove Theorem \ref{thm:annular-itineraries} we use the following recent covering result due to Bergweiler, Rippon and Stallard, see \cite[Theorem 3.3]{bergweiler-rippon-stallard13}. Here $\rho_X(z_1,z_2)$ stands for the hyperbolic distance between $z_1$ and $z_2$ relative to $X$, where $X$ is a hyperbolic domain, that is, $X$ has at least two boundary points.

\begin{lem}[Bergweiler, Rippon and Stallard 2013]
There exists an absolute constant $\delta>0$ such that if $f:A(R,R')\rightarrow \mathbb C^*$ is analytic, where $R'>R$, then for all $z_1,z_2\in A(R,R')$ such that 
$$
\rho_{A(R,R')}(z_1,z_2)<\delta\quad \mbox{ and }\quad |f(z_2)|\geqslant 2|f(z_1)|,
$$
we have
$$
f\bigl(A(R,R')\bigr)\supset \overline{A}\bigl(|f(z_1)|,|f(z_2)|\bigr).
$$
\label{lem:annuli-covering}
\end{lem}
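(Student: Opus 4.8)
The plan is to take each $B_n$ to be a single round sub-annulus of $A_n$ which is conformally \emph{fat} enough to contain a short closed hyperbolic geodesic, and then to apply Lemma~\ref{lem:annuli-covering} along that geodesic, exploiting that the extreme values $m(\rho)$ and $M(\rho)$ of $|f|$ on a circle $\{|z|=\rho\}$ are respectively minuscule and enormous once $\rho$ is close to $0$ or $\infty$ (Lemma~\ref{lem:mrf}(i)).

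First I would fix $\Lambda=\Lambda(\delta)>0$, with $\delta$ the absolute constant of Lemma~\ref{lem:annuli-covering}, so large that every round annulus $A(r,re^{\Lambda})$, which has modulus $\Lambda/2\pi$, has core geodesic $\{|z|=re^{\Lambda/2}\}$ of hyperbolic length $<2\delta$; then any two of its points lie at hyperbolic distance $<\delta$ in $A(r,re^{\Lambda})$. Let $\cdots<P_{-1}<P_0=R_-<R_+=P_1<P_2<\cdots$ denote the successive boundary radii of the annuli $A_n$, so $P_{k+1}=M(P_k)$ for $k\geq1$ and $P_{k-1}=m(P_k)$ for $k\leq0$, with $P_k\to\infty$ and $P_{-k}\to0$. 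By Lemma~\ref{lem:mrf}(i),(ii) the ratios $\log(P_{k+1}/P_k)$ are eventually increasing in $k$ and $\log(P_k/P_{k-1})$ eventually increasing as $k\to-\infty$, and by Lemma~\ref{lem:mrf}(iv) we have $M(re^{\Lambda/2})/M(r)\to\infty$ as $r\to\infty$ and $m(r)/m(re^{-\Lambda/2})\to\infty$ as $r\to0$; so, taking $R_+$ sufficiently large and $R_-$ sufficiently small (as we may in the definition of the $A_n$), I can arrange: $P_k>P_{k-1}e^{2\Lambda}$ for all $k$; $M(P_ke^{\Lambda/2})\geq M(P_k)e^{\Lambda}$ for $k\geq1$ and $m(P_ke^{-\Lambda/2})\leq m(P_k)e^{-\Lambda}$ for $k\leq-1$; and $M(\rho)\geq2m(\rho)$ for every $\rho$ arising below. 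Now set $B_n:=\overline{A}(P_n,P_ne^{\Lambda})$ for $n\geq1$, $B_n:=\overline{A}(P_ne^{-\Lambda},P_n)$ for $n\leq-1$, and $B_0$ any closed $\Lambda$-fat round sub-annulus of $A_0$ lying near its outer boundary; the separation $P_k>P_{k-1}e^{2\Lambda}$ ensures $B_n\subseteq A_n$.

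For each $n$ let $\rho_n$ be the geometric mean of the two radii of $B_n$, i.e.\ the radius of the core geodesic of the open annulus $A_n':=\interior B_n$; so $\rho_n\in A_n$, $\rho_n\to\infty$ as $n\to\infty$ and $\rho_n\to0$ as $n\to-\infty$. On $\{|z|=\rho_n\}$ pick $z_1,z_2$ attaining $|f(z_1)|=m(\rho_n)$ and $|f(z_2)|=M(\rho_n)$; then $|f(z_2)|\geq2|f(z_1)|$ and $\rho_{A_n'}(z_1,z_2)<\delta$, so Lemma~\ref{lem:annuli-covering} applied with domain $A_n'$ yields
$$
f(B_n)\ \supseteq\ f(A_n')\ \supseteq\ \overline{A}\bigl(m(\rho_n),M(\rho_n)\bigr).
$$
Since the inner and outer radii of the $B_k$ both increase with $k$, the $k$ for which $B_k\subseteq\overline{A}(m(\rho_n),M(\rho_n))$ form an interval of integers. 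For $n\geq1$ I would let $k_n$ be its lower endpoint: its upper endpoint is $\geq n+1$ because the outer radius $M(P_n)e^{\Lambda}$ of $B_{n+1}$ is $\leq M(P_ne^{\Lambda/2})=M(\rho_n)$, and $k_n\leq0$ because $m(\rho_n)\leq m(\rho_1)$ lies below the inner radius of $B_0$ (using that $m(r)$ decays faster than any power of $r$). Thus $f(B_n)\supseteq B_k$ for $k_n\leq k\leq n+1$ with $k_n\leq0<1$; and since $\rho_n$ increases with $n$, the sequence $(k_n)_{n>0}$ is non-increasing, whence $|k_m|\geq|k_n|$ when $m>n>0$. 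The case $n\leq-1$ is the mirror image: repeat the argument using the $r\to0$ assertions of Lemma~\ref{lem:mrf}, or apply the above to $\tilde f(z)=f(1/z)$, to get $f(B_n)\supseteq B_k$ for $n-1\leq k\leq k_n$ with $k_n\geq0$ and $|k_m|\geq|k_n|$ when $m<n<0$. (No covering condition is imposed on $B_0$.)

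The crux, and the main difficulty, is the tension between the two requirements on $B_n$: it must be conformally fat (modulus $\geq\Lambda/2\pi$) for Lemma~\ref{lem:annuli-covering} to be applicable, yet positioned inside $A_n$ so that $f(B_n)$ still reaches the outer boundary of $B_{n+1}$ (resp.\ of $B_{n-1}$ when $n<0$). Resolving this is possible only because $A_n$ is itself enormously fat ($\log(P_{n+1}/P_n)\to\infty$, by Lemma~\ref{lem:mrf}(i)) and because $M$ grows fast enough that placing $B_n$ against the \emph{inner} edge of $A_n$ loses only a bounded factor in the exponent while gaining an unbounded one, namely $M(P_ne^{\Lambda/2})/M(P_n)\to\infty$ from Lemma~\ref{lem:mrf}(iv), with a symmetric statement near $0$. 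A secondary point needing care is keeping $k_n\leq0$ for all $n>0$ and $k_n\geq0$ for all $n<0$, so that $|k_n|$ is genuinely monotone across $n=0$; this is why $B_0$ is pushed toward the outer boundary of $A_0$ and why $R_+$ is taken large and $R_-$ small.
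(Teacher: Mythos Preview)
Your proposal does not address the stated lemma at all. Lemma~\ref{lem:annuli-covering} is a general covering result for analytic maps of an annulus into $\C^*$, quoted in the paper from \cite{bergweiler-rippon-stallard13} and not proved there. What you have written is instead a proof of Theorem~\ref{thm:annular-itineraries}, which \emph{uses} Lemma~\ref{lem:annuli-covering} as a black box to construct the annuli $B_n$. If the intended target was indeed Theorem~\ref{thm:annular-itineraries}, the following comparison applies.

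As a proof of Theorem~\ref{thm:annular-itineraries} your argument is correct and follows the paper's strategy: take each $B_n$ to be a round sub\-annulus of $A_n$ of fixed modulus large enough that its core geodesic has hyperbolic length below $\delta$, choose $z_1,z_2$ on that geodesic realising $m(\rho_n)$ and $M(\rho_n)$, and invoke Lemma~\ref{lem:annuli-covering}. The only genuine difference is the placement of $B_n$. The paper centres $B_n$ at $\mu^n(R_+)$, where $\mu(r)=\varepsilon M(r)$, and spends Lemmas~\ref{lem:mu} and~\ref{lem:annular-itineraries} checking that this lands inside $A_n$; the payoff is the clean identity $M(\mu^n(R_+))=\mu^{n+1}(R_+)/\varepsilon$, which is exactly the outer radius of $B_{n+1}$, and the auxiliary sequence $\mu^n$ is reused later in the proof of Theorem~\ref{thm:fast-escaping-set-not-empty}. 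You anchor $B_n$ at the inner boundary $|z|=P_n=M^{n-1}(R_+)$ and appeal directly to Lemma~\ref{lem:mrf}(iv) to obtain $M(P_ne^{\Lambda/2})\geq M(P_n)e^\Lambda$, which avoids the preparatory lemmas at the price of slightly more ad hoc estimates; you also make the monotonicity of $|k_n|$ explicit, which the paper leaves implicit. The two arguments are minor variants of one another.
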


Now we prove Theorem \ref{thm:annular-itineraries}.

\begin{proof}[Proof of Theorem \ref{thm:annular-itineraries}]
If $A(\varepsilon)=A(\varepsilon, \frac{1}{\varepsilon})$, $0<\varepsilon<1$, and $C=\{z : |z|=1\}$, then the hyperbolic length of $C$ with respect to $A(\varepsilon)$ is
$$
\ell_{A(\varepsilon)}(C)=\frac{\pi^2}{\log(1/\varepsilon)}
$$
(see \cite[Example 12.1]{beardon-minda07}). Since the hyperbolic length is invariant under conformal transformations, the hyperbolic length of the circle $\{z : |z|=r\}$ with respect to $A(\varepsilon r, \frac{1}{\varepsilon}r)$ is also $-\pi^2/\log \varepsilon$. We choose $0<\varepsilon<1$ to be sufficiently small that 
$$
\frac{\pi^2}{\log(1/\varepsilon)}<\delta,
$$
where $\delta$ is the absolute constant of Lemma \ref{lem:annuli-covering}.

Let $\mu(r)=\varepsilon M(r)$ and let $R_2(f,\varepsilon)$ be the constant in Lemma \ref{lem:annular-itineraries}. Then we claim that there exists $R_0=R_0(f,\varepsilon)>R_2(f,\varepsilon)>0$ such that if $R_+\geqslant R_0$, then
\begin{equation}
M^{n-1}(R_+)<\varepsilon \mu^{n}(R_+)<\mu^n(R_+)<\frac{1}{\varepsilon}\mu^{n}(R_+)<M^n(R_+)
\label{eqn:annular-itineraries}
\end{equation}
for all $n>0$. Indeed Lemma \ref{lem:annular-itineraries} ensures that the first inequality is satisfied. The two middle inequalities are clear because $0<\varepsilon <1$, and the last one is due to the fact that $\mu(r)$ is increasing for large values of $r$ and $\mu^{n-1}(R_+)<M^{n-1}(R_+)$.

\begin{figure}[h!]
\centering
\vspace{30pt}
\def\svgwidth{.5\linewidth}
\input{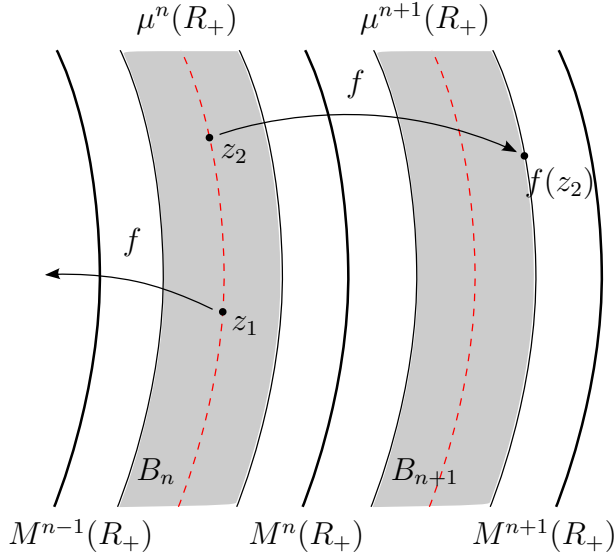}
\vspace{15pt}
\caption{Construction in the proof of Theorem \ref{thm:annular-itineraries}.}
\end{figure}

Let $B_0:=\overline{A_0}$ and, for $n>0$, define
$$
B_n:=\overline{A}\bigl(\varepsilon \mu^{n}(R_+),\ \textstyle{\frac{1}{\varepsilon}}\mu^{n}(R_+)\bigr)\subseteq A_n.
$$
Take $z_1,z_2\in B_n$ such that $|z_1|=|z_2|=\mu^n(R_+)$ and
$$
|f(z_1)|=m\bigl(\mu^n(R_+)\bigr),\quad |f(z_2)|=M\bigl(\mu^n(R_+)\bigr).
$$
This is possible by \eqref{eqn:annular-itineraries}. Our choice of $\varepsilon$ ensures that $\rho_{B_n}(z_1,z_2)<\delta$ and, for $R_+$ large enough, the condition $|f(z_2)|\geqslant 2|f(z_1)|$ is trivially satisfied. 

Finally observe that if $R_+$ is large enough we can make sure that for every $n>0$, $m\bigl(\mu^n(R_+)\bigr)<R_+$. Then Lemma \ref{lem:annuli-covering} tells us that
$$
\begin{array}{rl}
f(B_n)\hspace{-6pt}&\supseteq \overline{A}\bigl(|f(z_1)|,|f(z_2)|\bigr)=\overline{A}\Bigl(m\bigl(\mu^n(R_+)\bigr), M\bigl(\mu^n(R_+)\bigr)\Bigr)\vspace{5pt}\\ 
& \supseteq \overline{A}\left(R_+,\frac{1}{\varepsilon}\mu^{n+1}(R_+)\right)\supseteq \displaystyle{\bigcup_{j=-k_n}^{n+1} B_j}
\end{array} \vspace{-5pt}
$$
with $k_n\geqslant -1$.

The same kind of argument can be used to construct $R_->0$ and the annuli $B_n$ for $n<0$ using the iterates of the minimum modulus and of $\nu(r)=m(r)/\varepsilon$ on the circle $\{z : |z|=R_-\}$.
\end{proof}

\begin{rmk}
Note that $B_{-1},\ B_0$ and $B_1$ are the only annuli that are not compactly contained in the corresponding annulus $A_n$ because $\mu(R_+)/\varepsilon=M(R_+)$. In our construction $B_0$ is exceptional because $f(B_0)$ does not necessarily cover any $B_n$ (not even itself) whereas all the others at least cover themselves and the following one.
\end{rmk}

Theorem \ref{thm:types-annular-itineraries} describes what types of orbits can be found using the covering properties of the annuli $B_n$ that we just constructed.

\begin{proof}[Proof of Theorem \ref{thm:types-annular-itineraries}]
Using Lemma \ref{lem:covering} we deduce that if $f(B_{s_n})\supseteq B_{s_{n+1}}$, for all $n>0$, then there is a point $z_0\in B_{s_0}\subseteq A_{s_0}$ such that $f^n(z_0)\in B_{s_n}\subseteq A_{s_n}$, for all $n>0$.

We will now describe sequences that produce the various types of annular itinerary listed in Theorem \ref{thm:types-annular-itineraries}, with diagrams to illustrate each of these.
\begin{itemize}
\item The partition $\{A_n\}$ is convenient for describing fast escaping points that escape to one of the essential singularities. These correspond to annular itineraries where $s_{n+1}=s_n+1$ if $e=\overline{\infty}$ (or $s_{n+1}=s_n-1$ if $e=\overline{0}$) for $n$ arbitrarily large. 
$$
\hspace{35pt}\xymatrix{
\cdots & A_{-3} \ar@/_/[l]_f & A_{-2} \ar@/_/[l]_f & A_{-1} \ar@/_/[l]_f & A_0 & A_{1} \ar@/^/[r]^f & A_{2} \ar@/^/[r]^f & A_{3} \ar@/^/[r]^f & \cdots
}
$$

\item In order to construct a point with a periodic annular itinerary $s=\overline{s_1s_2\cdots s_n}$ we require that $s_{i+1}\in \{k_i,\hdots, s_i+1\}\setminus \{0\}$ if $s_{i+1}>0$ (or $s_{i+1}\in \{s_i-1,\hdots, k_i\}\setminus \{0\}$ otherwise) for all $1\leqslant i\leqslant n$.

$$
\hspace{35pt}\xymatrix@=15pt{
A_{n_2-3} \ar@/^2pc/[rrrrr]^f & A_{n_2-2} \ar@/^/[l]^f  & A_{n_2-1} \ar@/^/[l]^f & A_{n_2} \ar@/^/[l]^f & A_{0}  &  A_{n_1} \ar@/^/[r]^f &  A_{n_1+1} \ar@/^/[r]^f & A_{n_1+2}  \ar@/^2pc/[llll]^f
}
$$

\item We can construct bounded itineraries whose entries are all $s_n$ or $s_{n+1}$ that are not periodic as follows. We can always choose to stay in the same annulus (every $B_n$ covers itself) or go one level up or down.
$$
\hspace{35pt}\xymatrix{
A_n \ar@/^1pc/[r]^f \ar@(dl,ul)[]^f \ & A_{n+1} \ar@/^1pc/[l]^f \ar@(ur,dr)[]^f
}
$$
The claim that there are uncountably many such itineraries follows from the fact that at each step we always have two choices. Thus there is a bijection between this set and $2^\N \cong [0,1]$.
\vspace{5pt}

\item Unbounded non-escaping itineraries are those for which there is a sequence $(n_k)$ such that $|s_{n_k}|\rightarrow \infty$ as $k\rightarrow \infty$ and another sequence $(m_k)$ such that for all $k\in\N$, $|m_k|<R$ for some $R>0$. 
$$
\hspace{35pt}\xymatrix{
A_n \ar@/^1pc/[r]^f \ar@/^2pc/[r]^f \ar@/^3pc/[r]^f & A_{n+1} \ar@/^2pc/[r]^f \ar@/^3pc/[r]^f \ar@/^1pc/[l]^f & A_{n+2} \ar@/^2pc/[ll]^f \ar@/^3pc/[r]^f & A_{n+3}  \ar@/^3pc/[lll]^f  & \cdots
}
$$
We are able to construct uncountably many such itineraries because we can always map to the next annulus or map back to either $B_{1}$ or $B_{-1}$.\vspace{5pt}

\item Let $e\in\{0,\infty\}^\N$ and $(r_n)$ be a sequence of positive real numbers such that
\begin{center}
$
r_n>1 \Leftrightarrow e_n=\infty, \mbox{ for all } n\in \N,\quad \mbox{ and } \quad |\log r_n|\rightarrow +\infty \mbox{ as } n\rightarrow \infty.
$
\end{center}
Then we can construct a point $z_0\in I_e(f)$ such that $|f^n(z_0)| <r_n$ if $e_n=\infty$ (resp. $|f^n(z_0)| >r_n$ if $e_n=0$) for all $n\in\N$. To do so, note that each $B_n$ covers itself, so we can choose to stay in $B_n$, $n>0$,  for as many iterates as we need so that $M\bigl(\mu^{n}(R_+)\bigr)<r_n$ and then we can choose to jump to $B_{n+1}$.
$$
\hspace{35pt}\xymatrix{
A_n \ar@/^0pc/[r]^f \ar@(ur,ul)[]^f & A_{n+1} \ar@/^0pc/[r]^f \ar@(ur,ul)[]^f & A_{n+2} \ar@/^0pc/[r]^f \ar@(ur,ul)[]^f & A_{n+3} \ar@/^0pc/[r]^f \ar@(ur,ul)[]^f & \cdots 
}
$$

\end{itemize}
This concludes the proof of Theorem \ref{thm:types-annular-itineraries}.
\end{proof}

\section{Proof of Theorem \ref{thm:fast-escaping-set-not-empty}}

\label{sec:fast-escaping-set}

In the previous section we proved the existence of points that escape as fast as possible to $\infty$ and to $0$ but we are interested in having general fast escaping points in the sense of Definition \ref{dfn:fast-escaping}, which includes points that jump infinitely many times between a neighbourhood of $\infty$ and a neighbourhood of $0$. For this we will modify the construction used to prove Theorem \ref{thm:annular-itineraries} in order to mix the iterates of $M(r)$ and $m(r)$.

\begin{proof}[Proof of Theorem \ref{thm:fast-escaping-set-not-empty}]
Let $e=(e_n)$ be an essential itinerary and let $R_0>0$ be chosen large or small enough according to whether $e_0=\infty$ or $e_0=0$. Consider the sequence given by, for $n>0$,
\begin{itemize}
\item $R_{n}=M(R_{n-1})$, if $e_{n}=\infty$,
\item $R_{n}=m(R_{n-1})$, if $e_{n}=0$.
\end{itemize}
We will show that there is a point $z$ such that, for all $n\geqslant 0$,
\begin{itemize}
\item $|f^{n}(z)| \geqslant R_n$, if $e_{n}=\infty$,
\item $|f^{n}(z)| \leqslant R_n$, if $e_{n}=0$.
\end{itemize}
Hence $z\in A_e(f)$ (note that here $\ell=k=0$). For this we will also require an auxiliary sequence $(\widetilde{R_n})$ that combines the iterates of $\mu(r)=\varepsilon M(r)$ and\linebreak $\nu(r)=m(r)/\varepsilon$, where $0<\varepsilon <1$, according to the essential itinerary $e$. Let $\widetilde{R_0}=\mu(R_0)>R_0$, if $e_0=\infty$, and $\widetilde{R_0}=\nu(R_0)<R_0$, if $e_0=0$, so that the sequence $(\widetilde{R_n})$ has a head start on $(R_n)$. For $n>0$ let
\begin{itemize}
\item $\widetilde{R_{n}}=\mu(\widetilde{R_{n-1}})$, if $e_{n}=\infty$,
\item $\widetilde{R_{n}}=\nu(\widetilde{R_{n-1}})$, if $e_{n}=0$.
\end{itemize}
Lemma \ref{lem:Af-well-defined} guarantees that, for any value of $0<\varepsilon<1$, if $R_0>R_2(f,\varepsilon)$ or $R_0<1/R_2(f,\varepsilon)$, then the sequence $(R_n)$ will accumulate to $\{0,\infty\}$ following the essential itinerary $e$. If we let $R_3(f,\varepsilon)\geqslant R_2(f,\varepsilon)$ be such that, for instance, $\varepsilon r^2>r^{3/2}$ for all $r>R_3(f,\varepsilon)$, then the sequence $(\widetilde{R_n})$ will also escape provided that $R_0>R_3(f,\varepsilon)$.

Proceeding in the same way as in the proof of Theorem \ref{thm:annular-itineraries}, we define a sequence of closed annuli $(B_n)$ such that $f(B_n)\supseteq B_{n+1}$. First, for $n>0$, we put
\begin{equation}
B_n:=\overline{A}\bigl(\varepsilon\widetilde{R_n},\ \widetilde{R_n}/\varepsilon\bigr)=\left\{
\begin{array}{l}
\overline{A}\bigl(\varepsilon^2M(\widetilde{R_{n-1}}),\ M(\widetilde{R_{n-1}})\bigr),\quad \mbox{ if } e_n=\infty,\vspace{10pt}\\
\overline{A}\bigl(m(\widetilde{R_{n-1}}),\ m(\widetilde{R_{n-1}})/\varepsilon^2\bigr),\quad \mbox{ if } e_n=0,
\end{array}
\right.
\label{eq:main-1}
\end{equation}
where $0<\varepsilon<1$ has been chosen suitably small. Next we argue as in Lemma~\ref{lem:annular-itineraries} to combine the iterates of $M(r)$ and $m(r)$, assuming that $r$ is large enough or small enough, to obtain, for $n> 0$, 
\begin{equation}
\begin{array}{cl}
R_n<\varepsilon^2M(\widetilde{R_{n-1}}), & \mbox{ if } e_n=\infty,\vspace{10pt}\\
m(\widetilde{R_{n-1}})/\varepsilon^2 < R_n, & \mbox{ if } e_n=0.
\end{array}
\label{eq:main-2}
\end{equation}
We prove \eqref{eq:main-2} by induction. The base case $n=1$ holds provided that $R_0$ (and hence $R_1$) is large enough, or small enough:
$$
\begin{array}{cl}
R_1<\varepsilon^2M(\widetilde{R_0})=\varepsilon^2M(\varepsilon R_1), & \mbox{ if } e_1=\infty \mbox{ and } e_0=\infty,\vspace{10pt}\\
R_1<\varepsilon^2M(\widetilde{R_0})=\varepsilon^2M(R_1/\varepsilon), & \mbox{ if } e_1=\infty \mbox{ and } e_0=0,\vspace{10pt}\\
m(\varepsilon R_1)/\varepsilon^2=m(\widetilde{R_{0}})/\varepsilon^2 < R_1, & \mbox{ if } e_1=0 \mbox{ and } e_0=\infty,\vspace{10pt}\\
m(R_1/\varepsilon)/\varepsilon^2=m(\widetilde{R_{0}})/\varepsilon^2 < R_1, & \mbox{ if } e_1=0 \mbox{ and } e_0=0.
\end{array}
$$
Let $e_{n}=\infty$ and $e_{n+1}=0$, and suppose $R_n<\varepsilon^2M(\widetilde{R_{n-1}})$. Then,
$$
m(\widetilde{R_{n}})=m\bigl(\varepsilon M(\widetilde{R_{n-1}})\bigr)<\varepsilon^2m\bigl(\varepsilon^2M(\widetilde{R_{n-1}})\bigr) < \varepsilon^2 m(R_n)=\varepsilon^2 R_{n+1},
$$
as required.~Note that the first inequality here is due to the fact that\linebreak $m(r)<\varepsilon^2m(\varepsilon r)$ for $0<\varepsilon<1$ and $r>0$ sufficiently large (see Lemma \ref{lem:mrf} (i)),\linebreak while in the second inequality we use the induction hypothesis. The other three possible combinations of $e_{n}$ and $e_{n+1}$ follow similarly.

Thus, we have, for $n>0$, by \eqref{eq:main-1} and \eqref{eq:main-2},
$$
\begin{array}{cl}
B_n\subseteq A\bigl(R_n,\ M(R_n)\bigr)  \subseteq \C\setminus D(0,R_n), & \mbox{ if } e_n=\infty,\vspace{10pt}\\
B_n\subseteq A\bigl(m(R_n),\ R_n\bigr)\subseteq D(0,R_n),  & \mbox{ if } e_n=0.
\end{array}
$$
Thus, taking $z_1,z_2\in B_n$ such that $|z_1|=|z_2|=\widetilde{R_n}$ and
$$
|f(z_1)|=m(\widetilde{R_{n}}),\quad |f(z_2)|=M(\widetilde{R_{n}}),
$$
and applying Lemma \ref{lem:annuli-covering} we deduce that $f(B_n)$ always covers $B_{n+1}$ and therefore, by Lemma \ref{lem:covering}, $A_e(f)\neq \emptyset$.

Finally, Baker showed that transcendental self-maps of $\C^*$ can only have one doubly-connected Fatou component, which must separate 0 from $\infty$ \cite[Theorem 1]{baker87}. Thus, $B_n\cap J(f) \neq\emptyset$ for all $n$ large enough and hence $A_e(f)\cap J(f)\neq \emptyset$.  
\end{proof}

\section{Further properties of the escaping and fast escaping sets}

\label{sec:the-rest}

Here we prove Theorems \ref{thm:boundaries} and \ref{thm:components-closure-If-unbdd} which correspond to properties (I2) and (I3) proved by Eremenko in \cite{eremenko89} for transcendental entire functions. In Theorem \ref{thm:cc-fast-escaping-set-unbounded} we show that the components of the fast escaping set are unbounded. Before this, we prove Theorem \ref{thm:fast-escaping-Fatou-components} which concerns fast escaping Fatou components and is of independent interest. 

\begin{thm}
Let $U$ be a Fatou component of $f$ such that $U\cap A_{e}^{-\ell}(f,R)\neq \emptyset$ for some $R>0$, $\ell\in\N$ and $e\in\{0,\infty\}^\N$, then $\overline{U}\subseteq A_{e}^{-\ell}(f,R)$.
\label{thm:fast-escaping-Fatou-components}
\end{thm}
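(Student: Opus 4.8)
The plan is to combine the Schwarz--Pick contraction of the hyperbolic metric under $f$ with the fact that Fatou components of $f$ omit the origin, so that $f^{n}$ distorts moduli by only a bounded factor on each Fatou component, and then to absorb that factor using the super-exponential growth of $M(r)$ and $1/m(r)$. First I would reduce the statement: since $A_{e}^{-\ell}(f,R)$ is closed it suffices to prove $U\subseteq A_{e}^{-\ell}(f,R)$, and since $A_{e}^{-\ell}(f,R)=f^{-\ell}\bigl(A_{e}^{0}(f,R)\bigr)$ and $f^{\ell}(U)$ is contained in a single Fatou component $V$ which meets $A_{e}^{0}(f,R)$, it is enough to prove the case $\ell=0$, the general case following by applying $f^{\ell}$. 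So I fix a Fatou component $U$, a point $w_{0}\in U\cap A_{e}^{0}(f,R)$ with associated shift $k\in\N$, take an arbitrary $w_{1}\in U$, and set $\lambda:=\rho_{U}(w_{0},w_{1})<\infty$.

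For each $n$ the image $f^{n}(U)$ lies in a single Fatou component $U_{n}$ and $f^{n}\colon U\to U_{n}$ is holomorphic, so the Schwarz--Pick lemma gives $\rho_{U_{n}}(f^{n}(w_{0}),f^{n}(w_{1}))\leqslant\lambda$. By Baker's theorem (already used in the proof of Theorem~\ref{thm:fast-escaping-set-not-empty}) all but at most one Fatou component of $f$ is simply connected; if $U_{n}$ is simply connected then $0\notin U_{n}$ gives $\dist(z,\partial U_{n})\leqslant|z|$ for $z\in U_{n}$, so by the Koebe one-quarter theorem the hyperbolic density of $U_{n}$ is at least $c/|z|$ for an absolute constant $c>0$, and integrating along a path yields $\rho_{U_{n}}(z',z'')\geqslant c\,\bigl|\log|z'|-\log|z''|\bigr|$. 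Hence $\bigl|\log|f^{n}(w_{1})|-\log|f^{n}(w_{0})|\bigr|\leqslant\lambda/c$, so $|f^{n}(w_{1})|$ and $|f^{n}(w_{0})|$ differ by at most the fixed factor $e^{\lambda/c}$. In particular $f^{n}(w_{0})$ and $f^{n}(w_{1})$ lie on the same side of the unit circle for all large $n$, so $w_{0}$ and $w_{1}$ share the same essential itinerary, and the fast escaping inequalities satisfied by $w_{0}$ pass to $w_{1}$ up to the factor $e^{\lambda/c}$. The at most one doubly-connected Fatou component, which by Baker separates $0$ from $\infty$ and hence has finite modulus, is handled by the analogous lower bound for the hyperbolic metric of an annulus.

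To remove the factor $e^{\lambda/c}$ I would use that $M(r)$ and $1/m(r)$ grow faster than any power of $r$ (Lemma~\ref{lem:mrf}(i)): a fixed multiplicative error in the comparison of $|f^{n}(w_{1})|$ with the numbers $R_{n}$ of Definition~\ref{dfn:fast-escaping} is absorbed by advancing $(R_{n})$ by a bounded number of steps, and the convexity in Lemma~\ref{lem:mrf}(ii) keeps this under control along the whole sequence. Combined with the fact that $A_{e}(f)$ depends only on the equivalence class of $e$ and is independent of the reference radius (Lemmas~\ref{lem:Af-well-defined} and~\ref{lem:Af-compl-inv-and-indep-of-R}), this lets me choose the shift for $w_{1}$ so that $w_{1}\in A_{e}^{0}(f,R)$, which finishes the reduced case and hence the theorem.

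I expect the last step to be the main obstacle: upgrading ``$|f^{n}(w_{1})|$ is comparable to $|f^{n}(w_{0})|$ up to a bounded factor'' to membership of $w_{1}$ in the \emph{same} set $A_{e}^{-\ell}(f,R)$, with the same $\ell$ and the same $R$, forces one to keep careful track of the shift $k$ and to use the growth of $M^{n}(r)$ and $1/m^{n}(r)$ quantitatively rather than merely qualitatively. A secondary technical point is the separate treatment of the unique possible doubly-connected Fatou component.
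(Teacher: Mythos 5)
Your overall strategy agrees with the paper's: establish a bounded distortion of moduli along the iterated Fatou component and then exploit the super-polynomial growth of $M$ and $1/m$. One genuine difference is that you derive the distortion bound $|f^n(w_1)|\leqslant C\,|f^n(w_0)|$ directly from Schwarz--Pick and Koebe on the simply connected Fatou components (with a separate hyperbolic estimate for the unique possible doubly-connected one), whereas the paper simply invokes Baker's distortion lemma (Lemma~\ref{lem:distortion-escaping-Fatou-comp}). Your derivation is essentially a proof of that lemma in the $\C^*$ setting and has the advantage of applying uniformly whether orbits tend to $0$, to $\infty$, or oscillate, so no harm there.

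The gap is in the step you yourself flag as ``the main obstacle''. Knowing $|f^{n+\ell}(w_1)|\geqslant R_n/C$ (say in the all-$\infty$ case) does \emph{not} become $|f^{n+\ell}(w_1)|\geqslant R_n$ by ``advancing $(R_n)$ by a bounded number of steps'': advancing $R_n$ makes the target larger, not smaller, and shifting the starting index amounts to increasing $\ell$, which would only place $w_1$ in $A_e^{-\ell'}(f,R)$ for a larger $\ell'$. Likewise Lemma~\ref{lem:Af-compl-inv-and-indep-of-R} shows that the union $A_e(f)=\bigcup_\ell A_e^{-\ell}(f,R_0)$ is independent of $R_0$, but it gives no such freedom for a \emph{fixed} $\ell$, which is exactly what the theorem requires (and what the application in Theorem~\ref{thm:boundaries} needs). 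The correct closing move, which your proposal is missing, is to argue by contradiction: if some $w_1\in U$ were \emph{not} in $A_e^{-\ell}(f,R_0)$, then for some index $N$ along the subsequence $R_{n_j}\to\infty$ and some $c>1$ one would have $|f^{N+\ell}(w_1)|=R_N^{1/c}=:K$ with $K>R(f)$, hence $|f^{n_j+\ell}(w_1)|\leqslant\widetilde M^{\,n_j-N}(K)$ while $|f^{n_j+\ell}(w_0)|\geqslant\widetilde M^{\,n_j-N}(K^c)$. Lemma~\ref{lem:mrf}(iii) then gives
$$
\frac{|f^{n_j+\ell}(w_0)|}{|f^{n_j+\ell}(w_1)|}\geqslant\frac{\widetilde M^{\,n_j-N}(K^c)}{\widetilde M^{\,n_j-N}(K)}\geqslant\bigl(\widetilde M^{\,n_j-N}(K)\bigr)^{c-1}\rightarrow+\infty,
$$
which contradicts your bounded distortion. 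It is precisely the multiplicativity $M(r^c)\geqslant M(r)^c$ --- not just super-polynomial growth, nor the independence-of-$R_0$ lemma --- that turns the fixed multiplicative slack into an unbounded ratio. Without this contradiction argument the proof does not pin down the same $\ell$ and $R$.
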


We will use the following distortion lemma of Baker \cite{baker88} (see also Lemma 7 in \cite{bergweiler93}).

\begin{lem}[Baker 1988]
Let $G$ be an unbounded open set in $\C$ with at least two finite boundary points, and let $g$ be analytic in $G$. Let $D$ be a domain contained in $G$, and suppose that $g^n(D)\subseteq G$ for all $n$ and that $g^n|_D\rightarrow \infty$ as $n\rightarrow \infty$. Let $K$ be a compact subset of $D$. If $\CR\setminus G$ contains a connected set $\Gamma$ such that $\{a,\infty\}\subseteq \Gamma$ for some $a\in \C$, then there exist constants $C$ and $n_0$ such that
$$
|g^n(z_1)|\leqslant C|g^n(z_2)|
$$
for all $z_1,z_2\in K$ and $n\geqslant n_0$.
\label{lem:distortion-escaping-Fatou-comp}
\end{lem}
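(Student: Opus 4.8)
The plan is to obtain the conclusion from the contraction of the hyperbolic metric under the holomorphic maps $g^n$, combined with a lower bound for the hyperbolic density of the relevant target domains near $\infty$ that is \emph{uniform in $n$}; the continuum $\Gamma$ is exactly what makes such a uniform bound available.

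First I would replace $\Gamma$ by its closure, so that $\Gamma$ is a nondegenerate continuum in $\CR$ with $a,\infty\in\Gamma$ and $\Gamma\cap G=\emptyset$. Since $\Gamma$ is a closed connected subset of $\CR$, every component of $\CR\setminus\Gamma$ is simply connected, and because $\infty\in\Gamma$ each such component is a simply connected subdomain of $\C$. For each $n$ the set $g^n(D)$ is connected and contained in $G\subseteq\C\setminus\Gamma$, hence lies in a single component $\Omega_n$ of $\C\setminus\Gamma$, so $g^n$ maps $D$ holomorphically into $\Omega_n$. The hypothesis that $G$ has at least two finite boundary points makes $D$ hyperbolic, and since $a\notin\Omega_n$ each $\Omega_n$ is a simply connected proper subdomain of $\C$, hence hyperbolic, with hyperbolic metric $\lambda_{\Omega_n}(w)\,|dw|$.

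The heart of the matter is the estimate
\[
  \lambda_{\Omega_n}(w)\ \geq\ \frac{c}{|w|}\qquad\text{for all }n\text{ and all }w\in\Omega_n\text{ with }|w|\geq 2|a|,
\]
where $c>0$ depends only on $|a|$. This follows from the Koebe one-quarter theorem: $\Omega_n$ is simply connected and omits $a$, so $\dist(w,\partial\Omega_n)\leq|w-a|$, whence $\lambda_{\Omega_n}(w)\geq c_0/|w-a|$ for an absolute constant $c_0>0$, and for $|w|\geq 2|a|$ this is at least $c/|w|$. This is precisely the step that uses the connectedness of $\Gamma$ and the fact $\infty\in\Gamma$: it forces each $\Omega_n$ to be simply connected and to lie inside $\C$, which rules out the slower $1/(|w|\log|w|)$ decay that occurs for, say, $\C\setminus\{a\}$ and that would otherwise defeat the argument.

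To finish, set $d_0:=\sup_{z_1,z_2\in K}\rho_D(z_1,z_2)$, which is finite because $K$ is compact in the hyperbolic domain $D$. By the Schwarz--Pick lemma applied to $g^n\colon D\to\Omega_n$ we get $\rho_{\Omega_n}(g^n(z_1),g^n(z_2))\leq d_0$ for all $z_1,z_2\in K$. Since $g^n|_D\to\infty$ (uniformly on $K$), pick $n_0$ with $|g^n(z)|\geq R$ for all $z\in K$ and $n\geq n_0$, with $R$ large in terms of $|a|$ and $d_0$; then the hyperbolic geodesic of $\Omega_n$ joining $g^n(z_1)$ to $g^n(z_2)$ cannot enter $\{|w|<2|a|\}$, for otherwise the density bound would already force its hyperbolic length above $d_0$. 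Integrating the density bound along this geodesic and using $\bigl|d\log|w|\bigr|\leq|dw|/|w|$ yields
\[
  \bigl|\log|g^n(z_1)|-\log|g^n(z_2)|\bigr|\ \leq\ \frac{1}{c}\,\rho_{\Omega_n}\!\bigl(g^n(z_1),g^n(z_2)\bigr)\ \leq\ \frac{d_0}{c},
\]
i.e. $|g^n(z_1)|\leq C|g^n(z_2)|$ for all $z_1,z_2\in K$ and $n\geq n_0$ with $C:=e^{d_0/c}$. I expect the only genuinely delicate point is the uniform-in-$n$ density bound; once one is willing to invoke the standard fact that complements of continua through $\infty$ have simply connected components, the rest is the usual Koebe/Schwarz--Pick package together with the one-line ``geodesic stays far out'' observation.
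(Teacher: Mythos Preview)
Your argument is correct. The paper does not prove this lemma at all; it is quoted from Baker's 1988 paper (and Bergweiler's survey) and used as a black box in the proof of Theorem~\ref{thm:fast-escaping-Fatou-components}. What you have written is essentially the classical proof: pass to the simply connected complementary components of the continuum $\Gamma$, use the Koebe one-quarter bound $\lambda_\Omega(w)\geq 1/(4\,\dist(w,\partial\Omega))$ to get a uniform $c/|w|$ lower bound on the hyperbolic density, and then apply Schwarz--Pick to $g^n\colon D\to\Omega_n$. The ``geodesic stays far out'' step is handled correctly, since a geodesic dipping below $\{|w|=2|a|\}$ would have to cross the annulus $2|a|\leq|w|\leq R$, picking up hyperbolic length at least $c\log(R/2|a|)>d_0$ for $R$ large. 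So there is nothing to compare: you have supplied a valid proof where the paper simply cites one.
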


\begin{proof}[Proof of Theorem \ref{thm:fast-escaping-Fatou-components}]
We can assume without loss of generality that $U$ is simply-connected. Otherwise we can argue in a similar way with $f(U)$, which must be simply-connected because $f$ can only have one multiply-connected Fatou component \cite[Theorem 1]{baker87}. Let $(R_n)$ be the sequence starting with $R_0>R(f)$ (see Lemma \ref{lem:Af-well-defined}) and defined iteratively by $R_n=\widetilde{M}(R_{n-1})$ where $\widetilde{M}(r)$ is $M(r)$ or $m(r)$ according to the essential itinerary $e$. We assume that there is a subsequence $(n_j)$ such that $R_{n_j}\rightarrow +\infty$ as $j\rightarrow \infty$, the proof is similar in the other case. If $z_0\in U\cap A_e^{-\ell}(f,R_0)$, where $\ell\in\N$, then 
\begin{equation}
|f^{n_j+\ell}(z_0)|\geqslant \widetilde{M}^{n_j}(R_0)=R_{n_j},\quad \mbox{ for } j\in\N.
\label{eq:escaping1}
\end{equation}
Note that by normality the whole component $U$ is in $I(f)$ and (by Lemma \ref{lem:distortion-escaping-Fatou-comp}) all points in $U$ have the same essential itinerary $e$. 

Suppose now that there is $z_1\in U\setminus A_e^{-\ell}(f,R_0)$. This means that there is $N=n_k$ for some $k\in\N$ and $c>1$ such that
$$
R(f)<|f^{N+\ell}(z_1)|=\widetilde{M}^{N}(R_0)^{1/c}=R_{N}^{1/c}=:K
$$
and hence, by the definition of $\widetilde{M}(r)$,
\begin{equation}
|f^{n_j+\ell}(z_1)|\leqslant \widetilde{M}^{n_j-N}(R_{N}^{1/c}) ,\quad \mbox{ for } j\in\N \mbox{ such that } n_j>N.
\label{eq:escaping2}
\end{equation}
We can suppose that $K$ is larger than the constant $R=R(f)$ from Lemma \ref{lem:mrf} (iii) such that $M(r^k)\geqslant M(r)^k$ for $r\geqslant R$ and $k>1$. Then, combining equations \eqref{eq:escaping1} and \eqref{eq:escaping2}, we obtain
$$
\frac{|f^{n_j+\ell}(z_0)|}{|f^{n_j+\ell}(z_1)|}\geqslant \frac{\widetilde{M}^{n_j}(R_0)}{\widetilde{M}^{n_j-N}(R_{N}^{1/c})}=\frac{\widetilde{M}^{n_j-N}\bigl(\widetilde{M}^{N}(R_0)\bigr)}{\widetilde{M}^{n_j-N}(K)}=\frac{\widetilde{M}^{n_j-N}(K^c)}{\widetilde{M}^{n_j-N}(K)}
$$
for all $n_j>N$. This contradicts Lemma \ref{lem:distortion-escaping-Fatou-comp} because, by Lemma \ref{lem:mrf} (iii),
$$
\frac{\widetilde{M}^{n_j-N}(K^c)}{\widetilde{M}^{n_j-N}(K)}=\frac{\bigl(\widetilde{M}^{n_j-N}(K)\bigr)^c}{\widetilde{M}^{n_j-N}(K)}=\bigl(\widetilde{M}^{n_j-N}(K)\bigr)^{c-1}\rightarrow +\infty \mbox{ as } j\rightarrow \infty.
$$
Therefore $U\subseteq A_e^{-\ell}(f,R)$ and, since $A_e^{-\ell}(f,R)$ is closed, $\overline{U}\subseteq A_e^{-\ell}(f,R)$.


\end{proof}


Now we can prove Theorem \ref{thm:boundaries}, which says that, for each $e\in \{0,\infty\}^\mathbb N$,\linebreak $J(f)=\partial A_e(f)=\partial I_e(f)$ and also that $J(f)=\partial A(f)=\partial I(f)$.

\begin{proof}[Proof of Theorem \ref{thm:boundaries}]
Take $z\in J(f)$, and let $V$ be a neighbourhood of $z$. Consider $z_1\in A_e(f)\subseteq I(f)$ and let $z_2=f(z_1)\neq z_1$. Since the family of iterates of $f$ is not normal in $V$, by Montel's theorem we can find a preimage $z^*$ of $z_1$ or $z_2$ in $V$, that is, $f^k(z^*)\in\{z_1,z_2\}$ for some $k\geqslant 1$. Since $A_e(f)$ and $I(f)$ are completely invariant, $z^*\in A_e(f)\subseteq I(f)$. Thus $J(f)\subseteq \overline{A_e(f)}$. But $\interior A_e(f)\subseteq \interior I(f)\subseteq F(f)$ because periodic points are dense in $J(f)$. So $J(f)\subseteq \partial A_e(f)$.

The opposite inclusion follows from Theorem \ref{thm:fast-escaping-Fatou-components}. If there exists a point\linebreak $z\in \partial A_e(f)\cap~F(f)$, then there would be points arbitrarily close to $z$ in $A_e(f)$ but since $F(f)$ is open the whole Fatou component would be in $A_e(f)$. Hence $\partial A_e(f)\subseteq J(f)$.

The facts that $J(f)=\partial I_e(f)$ for each $e\in\{0,\infty\}^\N$ and $J(f)=\partial A(f)=\partial I(f)$ are proved similarly.
\end{proof}

Observe that $\{A_e(f)\}$ and $\{I_e(f)\}$ are both uncountable collections of disjoint sets all sharing the same boundary, which is precisely the Julia set $J(f)$. Baker, Dom\'inguez and Herring \cite{baker-dominguez-herring01} had shown previously that if $f$ is a meromorphic function with a certain set of essential singularities $E$ then the set of points escaping to one particular $e\in E$, namely $I(f,e)$, satisfies that $\partial I(f, e)=J(f)$. This implies that in our setting $\partial I_0(f)=\partial I_\infty(f)=J(f)$ which was also shown by Fang \cite{liping98}. Our result shows that this property holds for $I_e(f)$ for any essential itinerary $e\in\{0,\infty\}^\N$.

Next we prove Theorem \ref{thm:components-closure-If-unbdd}.~Recall that a set $X$ is unbounded in $\CS$ if\linebreak $\overline{X}\cap \{0,\infty\}\neq \emptyset$.

\begin{proof}[Proof of Theorem \ref{thm:components-closure-If-unbdd}]
Suppose to the contrary that $X$ is a component of $\overline{I_e(f)}$ that is bounded away from~$0$ and $\infty$. Then there is a topological annulus $A$ in the complement of $I_e(f)$ separating $X$ from both $0$ and $\infty$. Since the points in $A$ have orbits that miss $I_e(f)$, which consists of infinitely many points, $A\subseteq F(f)$ by Montel's theorem. Let $K$ be the component of $\C^*\setminus A$ containing $X$. By Theorem \ref{thm:boundaries}, $K\cap J(f)\neq \emptyset$ and hence $A$ must be contained in a multiply-connected component of $F(f)$. But Baker and Dom\'inguez showed that such components must be doubly-connected and separate $0$ from $\infty$ \cite{baker-dominguez98} which is a contradiction to the fact that $A$ is doubly-connected and separates a component of $J(f)$ from both $0$ and $\infty$. 

The last claim in the statement of the theorem follows from the fact that every connected component of $\overline{I(f)}$ contains at least one component of $\overline{I_e(f)}$ for some $e\in\{0,\infty\}^\N$, and hence it must be unbounded as well.
\end{proof}

Before proving Theorem \ref{thm:cc-fast-escaping-set-unbounded} we need the following lemma concerning preimages of unbounded closed sets under transcendental self-maps of $\C^*$.

\begin{lem}
Let $f$ be a transcendental self-map of $\CS$, and let $X\subseteq \C^*$ be an unbounded continuum. Then all the components of $f^{-1}(X)$ are unbounded.
\label{lem:unbounded}
\end{lem}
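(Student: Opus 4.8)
The plan is to argue by contradiction, adapting the standard argument used for transcendental entire functions to the punctured plane. Suppose some connected component $C$ of $f^{-1}(X)$ is bounded in $\CS$, i.e.\ $\overline{C}\cap\{0,\infty\}=\emptyset$; then $C$ is a compact subset of an open annulus $A(r_1,r_2)$ with $0<r_1<r_2<\infty$. The idea is to enclose $C$ in a bounded open set $U$ whose boundary misses $f^{-1}(X)$, and then transport everything forward by $f$ to contradict the connectedness of $X$.

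Concretely, I would first set $K:=f^{-1}(X)\cap\overline{A}(r_1,r_2)$, which is compact (here I use that $X$, being a continuum, is closed, so $f^{-1}(X)$ is closed). One checks that $C$ is a connected component of $K$: any connected subset of $K$ containing $C$ is a connected subset of $f^{-1}(X)$ and hence equals $C$. Since in a compact metric space a component coincides with its quasi-component, there is a set $A$ that is relatively clopen in $K$ with $C\subseteq A\subseteq A(r_1,r_2)$. Now $A$ and $K\setminus A$ are disjoint compact sets, so a standard separation argument yields a bounded open set $U$ with $A\subseteq U$, $\overline{U}\subseteq A(r_1,r_2)$ and $\partial U\cap K=\emptyset$; since $\partial U\subseteq\overline{A}(r_1,r_2)$ this gives $\partial U\cap f^{-1}(X)=\emptyset$, that is, $f(\partial U)\cap X=\emptyset$.

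For the final contradiction I would use that $f$ is non-constant holomorphic, hence an open map. The set $f(\overline{U})=f(U)\cup f(\partial U)$ is compact, so it is contained in some $\overline{A}(s_1,s_2)$ with $0<s_1<s_2<\infty$; as $X$ is unbounded in $\CS$ it is not contained in $\overline{A}(s_1,s_2)$, whence $X\setminus\overline{f(U)}\neq\emptyset$. On the other hand, choosing any $z\in C$ we get $f(z)\in X\cap f(U)$, so $X\cap f(U)\neq\emptyset$. Finally, openness of $f$ gives $\partial f(U)\subseteq f(\overline{U})\setminus f(U)\subseteq f(\partial U)$, which is disjoint from $X$; therefore $X=(X\cap f(U))\cup(X\setminus\overline{f(U)})$ is a disjoint union of two nonempty subsets of $X$ that are relatively open in $X$, contradicting the connectedness of $X$. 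Hence no component of $f^{-1}(X)$ is bounded in $\CS$, which proves the lemma. The case where $X$ accumulates only at $0$ requires no change, since all that is used is that $X$ is not contained in any compact subset of $\CS$. I expect the only slightly delicate point in writing this up to be the topological surrounding step (producing $U$ with $\partial U\cap K=\emptyset$); this is classical, but it is the heart of the argument and should be stated carefully via the description of components of compact plane sets.
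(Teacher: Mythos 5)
Your proof is correct and follows essentially the same strategy as the paper's: surround the hypothetical bounded component by a set whose boundary misses $f^{-1}(X)$, push everything forward by the open map $f$, and derive a contradiction from the connectedness and unboundedness of $X$. The one place you add genuine value is the separation step: the paper simply asserts the existence of a Jordan curve $\gamma\subseteq\C^*\setminus f^{-1}(X)$ with $W\subseteq\operatorname{int}\gamma$, whereas you justify the existence of a separating open set $U$ via the coincidence of components and quasi-components in the compact set $K=f^{-1}(X)\cap\overline{A}(r_1,r_2)$, which is the honest content of that assertion and avoids having to upgrade the separating boundary to a single Jordan curve.
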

\begin{proof}
Let $W$ be a connected component of $f^{-1}(X)$. Since $f$ is continuous and $X$ is closed, $f^{-1}(X)$ is also closed. Assume, to the contrary, that $W$ is bounded. Then there exists a Jordan curve $\gamma \subseteq \CS\setminus f^{-1}(X)$ such that $W\subseteq \mbox{int}\,\gamma$. Since $f$ is an open mapping, $f(\mbox{int}\,\gamma)$ is a connected open set such that $f(\mbox{int}\,\gamma)\subseteq \C^*$ and $\partial f(\mbox{int}\,\gamma)\subseteq f(\gamma)$ which does not meet $X$. Thus $X\subseteq f(\mbox{int}\,\gamma)$ which contradicts the fact that $X$ is unbounded.
\end{proof}

Finally we prove Theorem \ref{thm:cc-fast-escaping-set-unbounded} which says that the connected components of $A(f)$ are all unbounded. Note that in particular this implies that $I(f)$ has at least one component which has $0$ in its closure and one component (possibly the same component) which has $\infty$ in its closure. 

\begin{proof}[Proof of Theorem \ref{thm:cc-fast-escaping-set-unbounded}]
Let $z_0\in A_e(f)$. For simplicity we assume that $\ell=k=0$. Fix $n\in\N$ and suppose that $e_n=\infty$, that is,
$$
|f^n(z_0)|>R_n=M(R_{n-1}).
$$ 
Consider the finite sequence of closed sets
$$
X_{n,j}:=f^{-j}\bigl(\C\setminus D(0,R_n)\bigr),\quad j=1,\hdots,n,
$$
which, by Lemma \ref{lem:unbounded}, are unbounded. One of the connected components of $X_{n,j}$ must contain the point $f^{n-j}(z_0)$; we denote this component by $L_{n,j}$.

Now there are two cases to consider: either
\begin{enumerate}
\item[(i)] $e_{n-1}=\infty$ and $|f^{n-1}(z_0)|>M(R_{n-2})=R_{n-1}$, or
\item[(ii)] $e_{n-1}=0$ and $|f^{n-1}(z_0)|<m(R_{n-2})=R_{n-1}$.
\end{enumerate}
In case (i), $L_{n,1}$ cannot contain points of modulus less than $R_{n-1}$. Otherwise if~$w$ is such that $|w|<R_{n-1}$ and $f(w)\in L_{n,0}=\C\setminus D(0,R_n)$ then we would get a contradiction with the fact that $R_n=M(R_{n-1})$ but $|f(w)|>R_n$. Similarly, in case (ii), if $|f^{n-1}(z_0)|<R_{n-1}$ we cannot have points in $L_{n,1}$ that have modulus larger than $R_{n-1}$. 

Now, iterating this procedure, for every $n\in\N$, we deduce that $L_n=L_{n,n}$ is a closed connected set which is contained in $\C\setminus D(0,R_0)$, if $e_0=\infty$, or in $D(0,R_0)$, if $e_0=0$. Observe that 
$$
L_{n+1}\subseteq L_n.
$$
Otherwise there would exist $w\in L_{n+1}$ such that $w\notin L_n$ which means that
$$
\begin{array}{cl}
|f^{n+1}(w)|>R_{n+1},& \mbox{ if } e_n=\infty,\vspace{10pt}\\
|f^{n+1}(w)|<R_{n+1},& \mbox{ if } e_n=0,
\end{array}
$$
but
$$
\begin{array}{cl}
|f^{n}(w)|<R_{n},& \mbox{ if } e_n=\infty,\vspace{10pt}\\
|f^{n}(w)|>R_{n},& \mbox{ if } e_n=0,
\end{array}
$$
which is a contradiction. Therefore $(L_n\cup \{e_0\})$ is a nested sequence of continua all containing $z_0$ and $e_0$, and hence
$$
K=\bigcap_{n\in\N} (L_n\cup \{e_0\})
$$
is also a continuum in $\CR$ which contains $z_0$ and $e_0$. Let $\Gamma$ be the connected component of $K\setminus \{e_0\}$ that contains $z_0$. Then $\Gamma$ is closed and unbounded. Here we are using the following result from continuum theory: if $E_0$ is a continuum in $\CR$, $E_1$ is a closed subset of $E_0$, and $C$ is a component of $E_0\setminus E_1$ then $\overline{C}$ meets $E_1$ \cite[pp. 84]{newman61}. Since $\Gamma\subset A_e(f)$, the theorem is proved.
\end{proof}

\bibliography{bibliography.bib}

\end{document}